\DeclareMathOperator{\pet}{\mathbf{P}_\textnormal{\emph{e}}}
\DeclareMathOperator{\px}{\mathbf{P}_\textnormal{\emph{x}}}
\newcommand{\cent}{\overline{G^e}}
\newcommand*{\longhookrightarrow}{\ensuremath{\lhook\joinrel\relbar\joinrel\rightarrow}}
\newtheorem{lem}{Lemma}
\numberwithin{lem}{section}
\numberwithin{equation}{section}
\newtheorem{thm}[lem]{Theorem}
\newtheorem{prop}[lem]{Proposition}
\theoremstyle{definition}
\newtheorem{definition}[lem]{Definition}
\newtheorem*{rk*}{Remark}
\newtheorem{rk}[lem]{Remark}
\title{The Peterson Variety and the Wonderful Compactification}
\author{Ana B\u{a}libanu}
\date{}
\begin{document}
\maketitle

\begin{abstract}
We look at the centralizer in a semisimple algebraic group $G$ of a regular nilpotent element $e\in\text{Lie}(G)$, and show that its closure in the wonderful compactification is isomorphic to the Peterson variety. It follows that the closure in the wonderful compactification of the centralizer $G^x$ of any regular element $x\in\text{Lie}(G)$ is isomorphic to the closure of a general $G^x$-orbit in the flag variety. We also give a description of the $G^e$-orbit structure of the Peterson variety.
\end{abstract}

\section{Introduction}
The wonderful compactification of a semisimple complex algebraic group $G$ of adjoint type is a special case of the compactification of symmetric spaces introduced by DeConcini and Procesi in \cite{decproc}. Its boundary is a divisor with normal crossings with a unique closed $G\times G$-orbit, and in some sense it encodes the behavior of the group ``at infinity''. A survey of its structure can be found in \cite{evens}.

We will consider regular elements in the Lie algebra $\mathfrak{g}=\text{Lie}(G)$ and their centralizers in $G$, and describe the closure of these centralizers in the wonderful compactification $\overline{G}$. In particular, we will be interested in the unique conjugacy class of regular nilpotent elements, also called principal nilpotents. All the relevant structure theory of semisimple Lie algebras and of their regular orbits was developed by Kostant in \cite{kost1} and \cite{kost2}.

A principal nilpotent element sits inside a unique Borel subgroup $B$, and its centralizer is a unipotent abelian subgroup of $B$. In the full flag variety determined by the opposite Borel the closure of a general orbit of this centralizer is called the Peterson variety. This variety has been well-studied, and is known to be singular and non-normal except in very small rank \cite{kost3}. It was introduced by Dale Peterson in the 1990s and it has proved essential in the study of the quantum cohomology of flag varieties, for example in \cite{kost3}, \cite{rietsch}, and \cite{tym}.

We will show that the closure of the centralizer of the principal nilpotent in $\overline{G}$ is isomorphic to the Peterson variety. This will lead to the main result of this paper, which states that the closure in $\overline{G}$ of the centralizer of any regular element of $\mathfrak{g}$ is isomorphic to the closure of a sufficiently general orbit of this centralizer in the flag variety. Both of these results are shown by choosing appropriate projective embeddings given by very ample line bundles, and then establishing an isomorphism between the resulting homogeneous coordinate rings. 

This extends the case of a maximal torus $T$, which is the centralizer of a regular semisimple element---the closure of $T$ in the wonderful compactification is the toric variety whose fan is the fan of Weyl chambers (see \cite{evens}, Remark 4.5), and it is isomorphic to the closure of a general $T$-orbit in the flag variety \cite{dab}. 

In Section \ref{peterson} we recall some basics about the flag variety $\mathcal{B}$, the Peterson variety, and the very ample $G$-equivariant line bundles on $\mathcal{B}$. In Section \ref{wond} we present some analogous facts about the wonderful compactification by describing its construction via the Vinberg semigroup. In Section \ref{main} we construct an isomorphism between the homogeneous coordinate rings of the closure of the regular nilpotent centralizer in $\overline{G}$ and the Peterson variety. In Section \ref{centralizers} we extend the results of Section \ref{main} to the case of the centralizer of an arbitrary regular element. In Section \ref{orbit} we give an explicit description of the orbits of the regular nilpotent centralizer on the Peterson variety, from which it becomes clear, in particular, that except in very few cases they are infinite in number.

The author would like to thank Victor Ginzburg, her Ph.D. advisor, for his advice and guidance throughout the project, and Michel Brion and Sam Evens for helpful suggestions and discussions.

\section{The Peterson Variety}
\label{peterson}
Let $G$ be, as above, a complex semisimple algebraic group of adjoint type and rank $l$, $T$ a maximal torus, and $B$ a Borel subgroup containing $T$. Let $N$ be the unipotent radical of $B$ and $\alpha_1,\ldots,\alpha_l$ the set of positive simple roots. If $e_1,\ldots,e_l$ are corresponding simple root vectors in the Lie algebra $\mathfrak{g}$, then 
\[e=e_1+\ldots+e_l\]
is a principal nilpotent sitting inside $\mathfrak{b}$, and we denote by $G^e$ its centralizer in $G$. 

The centralizer $G^e$ is a unipotent abelian subgroup of $N$ of dimension equal to $l$. In type $A$, $e$ is the single nilpotent Jordan block, and $G^e$ is the group of unipotent matrices with constant entries along each superdiagonal.

Let $B^-$ be the opposite Borel subgroup and $\mathfrak{b}^-$ its Lie algebra. Viewing Borel subalgebras as points in the flag variety, $\mathfrak{b}^-$ is the basepoint of the flag variety $\mathcal{B}=G/B^-$, and the Peterson variety is the closure
\[\pet:=\overline{G^e\cdot\mathfrak{b}^-}\subset \mathcal{B}.\]

Recall that the $G$-equivariant line bundles on $\mathcal{B}$ are indexed by integral weights of $\mathfrak{g}$, and for a dominant weight $\lambda$ the space of global sections of the line bundle $\mathcal{L}_\lambda$ is identified with $V_\lambda$, the irreducible representation of highest weight $\lambda$, via
\begin{align*}
V_\lambda&\xlongrightarrow{\sim}\Gamma(\mathcal{B},\mathcal{L}_\lambda)\\
		v&\longmapsto\left[gB^-\mapsto (gB^-,v_\lambda^*(g^{-1}\cdot v))\right],
		\end{align*} 
where $v_\lambda^*$ is the lowest weight vector of $V^*_\lambda$. (Note that the space of global sections is $V_\lambda$ and not its dual, because we are taking the flag variety relative to the opposite Borel $B^-$.) 

Let $\omega_1,\ldots,\omega_l$ be the fundamental weights of $\mathfrak{g}$, $V_1,\ldots, V_l$ the fundamental representations, and for each $i$, let $V_i^*$ be the dual representation with lowest weight vector $v_i^*$. The Pl\"{u}cker embedding realizes the flag variety as a multi-projective variety
\begin{align*}
\mathcal{B}&\longhookrightarrow\prod_{i=1}^l\mathbb{P}(V_i^*)\\
gB^-&\longmapsto (g\cdot [v_1^*],\ldots,g\cdot[v_l^*])
\end{align*}
and its total coordinate ring is the multi-graded algebra given by summing the spaces of global sections of all $G$-equivariant line bundles:
\[R[\mathcal{B}]:=\bigoplus_{\lambda\text{ dom}}\Gamma(\mathcal{B},\mathcal{L}_\lambda)=\bigoplus_{\lambda\text{ dom}}V_\lambda.\]
(See \cite{arzh} for a detailed introduction to total coordinate rings, also called Cox rings.) Multiplication is given by projection onto the highest weight component:
\[V_{\lambda}\otimes V_{\mu}\longrightarrow V_{\lambda+\mu}.\]

The very ample line bundles on $\mathcal{B}$ correspond to regular dominant weights, and such a weight $\lambda$ produces a $\mathbb{Z}$-graded homogeneous coordinate ring, denoted $R_\lambda[\mathcal{B}]$, that is a quotient of the total coordinate ring given by taking a generic line in the semigroup of dominant weights:
\[R_\lambda[\mathcal{B}]:=\bigoplus_{n\geq0}\Gamma(\mathcal{B},\mathcal{L}_{n\lambda})=\bigoplus_{n\geq0} V_{n\lambda}.\]
The homogeneous coordinate ring of $\pet$ is then 
\[R_\lambda[\pet]=R_\lambda[\mathcal{B}]/\mathcal{I}_{\pet},\]
where 
\begin{align*}
\mathcal{I}_{\pet}&=\bigoplus_{n\geq0}\left\{u\in V_{n\lambda}\mid v_{n\lambda}^*(g\cdot u)=0,\, \forall gB^-\in\pet\right\}\\
	&=\bigoplus_{n\geq0}\left\{u\in V_{n\lambda}\mid v_{n\lambda}^*(g\cdot u)=0,\, \forall g\in G^e\right\}
\end{align*} is the ideal of global sections that vanish on the Peterson variety.

\section{The Wonderful Compactification}
\label{wond}
Let $\widetilde{G}$ be the simply-connected cover of $G$, $\widetilde{T}$ the corresponding maximal torus, and $\widetilde{Z}$ its center. Identifying $\text{End}(V_i)$ with $V_i\otimes V_i^*$ gives representation maps 
\[\rho_i:\widetilde{G}\longrightarrow V_i\otimes V_i^*.\]
 
We recall briefly the construction of the wonderful compactification via the Vinberg semigroup \cite{vinb}. Consider $\widetilde{G}\times_{\widetilde{Z}}\widetilde{T}$, where $\widetilde{Z}\longhookrightarrow \widetilde{G}\times \widetilde{T}$ is the anti-diagonal embedding. Define the embedding 
\begin{align*}
\chi:\, \widetilde{G}\times_{\widetilde{Z}}\widetilde{T}&\longhookrightarrow \mathbb{C}^l\times \prod_{i=1}^lV_i\otimes V_i^*\\
(g,t)&\longmapsto (\alpha_1(t),\ldots,\alpha_l(t),\omega_1(t)\rho_1(g),\ldots,\omega_l(t)\rho_l(g))
\end{align*}
The closure of the image of $\chi$ is the Vinberg semigroup $V_G$, and the first projection is a flat family of semigroups over $\mathbb{C}^l$ (see \cite{vinb}, Section 4.) The closure $V_G^0$ of the image of $\chi$ in the space
\[\mathbb{C}^l\times \prod_{i=1}^l(V_i\otimes V_i^*-\{0\})\]
 is a smooth open subset. Since $\tilde{Z}$ is central, $\widetilde{G}\times_{\widetilde{Z}}\widetilde{T}$ is a group, and it acts naturally on both $V_G$ and $V_G^0$. In particular, the torus $\{1\}\times \widetilde{T}$ acts freely on $V_G^0$ via coordinate-wise multiplication by 
\[(\alpha_1(t),\ldots,\alpha_l(t),\omega_1(t),\ldots,\omega_l(t)),\]
and the wonderful compactification of $G$ is defined to be the quotient of $V_G^0$ by this action:
\[\overline{G}:=V_G^0/\widetilde{T}\]
(see \cite{marthad}, 5.3.) It contains $G\cong\widetilde{G}/\widetilde{Z}$ as a dense open subset, and it has a natural $\widetilde{G}\times\widetilde{G}$-action on $\overline{G}$ that extends the two-sided action of $\widetilde{G}$ on $G$ itself.

The $\widetilde{G}\times \widetilde{G}$-equivariant line bundles on $\overline{G}$ correspond to integral weights of the group $\widetilde{G}$. For such a weight $\lambda$, the global sections of the line bundle $\mathcal{M}_\lambda$ are given by
\[\Gamma(\overline{G},\mathcal{M}_\lambda)\cong\bigoplus_{\mu\leq\lambda} V^*_\mu\otimes V_\mu\]
as a $\widetilde{G}\times \widetilde{G}$-module, where the sum is over all dominant weights $\mu$ less than $\lambda$---i.e. dominant weights $\mu$ such that $\lambda-\mu$ is a sum of simple roots with non-negative integral coefficients (see \cite{brion}, 3.2.3.) The line bundle $\mathcal{M}_\lambda$ is very ample exactly when $\lambda$ is a regular dominant weight.

The total coordinate ring of $\overline{G}$---that is, the affine coordinate ring of the Vinberg semigroup---is the multi-graded algebra
\[R[\overline{G}]=\bigoplus_{\lambda}\Gamma(\overline{G},\mathcal{M}_\lambda)=\bigoplus_{\lambda}\left(\bigoplus_{\mu\leq\lambda} V^*_\mu\otimes V_\mu\right)t^\lambda,\]
with multiplication on the right hand side given by viewing the algebra as a subalgebra of $\mathbb{C}[\widetilde{G}\times \widetilde{T}]$. In particular, the multiplication map has the property
\[m:(V^*_\mu\otimes V_\mu)\otimes (V^*_\nu\otimes V_\nu)\cong (V_\mu\otimes V_\nu)^*\otimes (V_\mu\otimes V_\nu)\longrightarrow\bigoplus_{\xi\leq\mu+\nu}V^*_\xi\otimes V_\xi\]
by decomposing $(V_\mu\otimes V_\nu)^*$ and $V_\mu\otimes V_\nu$ separately into irreducible representations and then projecting onto the components of the form $V_\xi^*\otimes V_\xi$.

From now on fix a regular dominant weight $\lambda$ in the root lattice. Then for any $\mu\leq\lambda$ the $\widetilde{G}$-representations $V_\lambda$ and $V_\mu$ descend to representations of the adjoint group $G$. The $\mathbb{Z}$-graded homogeneous coordinate ring of $\overline{G}$ produced by the very ample line bundle $\mathcal{M}_\lambda$ is a quotient algebra of $R[\overline{G}]$ corresponding to the generic line given by $\lambda$ in the cone of dominant weights:
$$R_\lambda[\overline{G}]:=\bigoplus_{n\geq0}\Gamma(\overline{G},\mathcal{M}_{n\lambda})=\bigoplus_{n\geq0}\left(\bigoplus_{\mu\leq n\lambda} V^*_\mu\otimes V_\mu\right)t^{n\lambda}.$$

Let $\cent$ be the closure of the centralizer of the principal nilpotent in the wonderful compactification of $G$. Its homogeneous coordinate ring is then
\[R_\lambda[\overline{G^e}]=R_\lambda[\overline{G}]/\mathcal{I}_{\cent},\]
where 
\begin{align*}
\mathcal{I}_{\cent}&=\bigoplus_{n\geq0}\left\{\sum f^\mu_{v^*,u}t^{n\lambda}\in \bigoplus_{\mu\leq n\lambda} V_{\mu}^*\otimes V_{\mu}\mid \sum f^\mu_{v^*,u}(g)\lambda(t)^n=0,\,\forall (g,t)\in G^e\times T\right\}\\
	&=\bigoplus_{n\geq0}\left\{\sum f^\mu_{v^*,u}t^{n\lambda}\in \bigoplus_{\mu\leq n\lambda} V_{\mu}^*\otimes V_{\mu}\mid \sum f^\mu_{v^*,u}(g)=0,\,\forall g\in G^e\right\},
\end{align*}
is the homogeneous ideal of global sections vanishing on $\cent$.

\begin{rk}
\label{choices}
Because of the choice of $\lambda$ above, from now on whenever a representation $V_\mu$ with highest weight $\mu$ appears, the weight $\mu$ will be an element of the root lattice, and $V_\mu$ will descend to a representation of $G$. 

This choice is not necessary, and the same argument goes through essentially unchanged with an arbitrary choice of regular dominant $\lambda$---however, this will allow us to apply $G$ directly to the spaces $V_\mu$ without having to repeatedly refer to the simply-connected cover $\widetilde{G}$.

The following lemmas and propositions use only the fact that $G$ is semisimple, that $\lambda$, $\mu$, $\nu$ are weights of $G$, and that $G^e$ is an abelian unipotent subgroup of $G$ that centralizes the principal nilpotent $e$. Therefore they will apply also to the setting of Section \ref{centralizers}, where the group under consideration will not necessarily be of adjoint type.
\end{rk}

Before we begin to prove our results, we introduce some notation: For any dominant weight $\mu$ in the root lattice, and any $u\in V_\mu$ and $v^*\in V_\mu^*$, denote by $f^\mu_{v^*,u}$ the function of $G$ corresponding to the matrix entry $v^*\otimes u\in V_\mu^*\otimes V_\mu$---that is,
\[f^\mu_{v^*,u}(g)=v^*(g\cdot u).\]
Let $v_\mu^*$ denote the lowest weight vector of $V_\mu^*$, and make this choice such that, under the multiplication map
\[V_\mu^*\otimes V_\nu^*\longrightarrow V_{\mu+\nu}^*,\]
$v_{\mu+\nu}^*$ is the image of $v_\mu^*\otimes v_\nu^*$, for any dominant weights $\mu$ and $\nu$. (This can be done inductively, beginning from the fundamental representations.) Then, since 
\[v_\mu^*\otimes v_\nu^*\in V_\mu^*\otimes V_\nu^*\]
always belongs to the irreducible component of the tensor isomorphic to $V_{\mu+\nu}^*$, we have
\[m(v_\mu^*\otimes u_1,v_\nu^*\otimes u_2)=v_{\mu+\nu}^*\otimes u\in V_{\mu+\nu}^*\otimes V_{\mu+\nu}\]
where $u$ is the projection of the tensor $u_1\otimes u_2\in V_\mu\otimes V_\nu$ onto the irreducible component $V_{\mu+\nu}$. In other words,
\begin{align}
\label{topmult}
f^\mu_{v_\mu^*,u_1}\cdot f^\nu_{v_\nu^*,u_2}=f^{\mu+\nu}_{v^*_{\mu+\nu},u}.
\end{align}

\section{The Principal Nilpotent Case}
\label{main}
In this section we will show that the varieties $\pet$ and $\cent$ are isomorphic, by establishing an isomorphism between the homogeneous coordinate rings $R_\lambda[\pet]$ and $R_\lambda[\overline{G^e}]$. Define, component-wise, a map 
\begin{align}
\label{phi'}
\Phi': \,R_\lambda[\mathcal{B}]&\longrightarrow R_\lambda[\overline{G}]\\
	u&\longmapsto (v_{n\lambda}^*\otimes u)t^{n\lambda}\nonumber
	\end{align}
for $u\in V_{n\lambda}$. We will show

\begin{thm}
\label{mainthm}
The map $\Phi'$ descends to an isomorphism of graded algebras
\[\Phi: R_\lambda[\pet]\longrightarrow R_\lambda[\cent].\]
\end{thm}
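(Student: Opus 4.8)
The plan is to prove that the component-wise map $\Phi'$ of \eqref{phi'} is an injective homomorphism of graded algebras, that it carries $\mathcal{I}_{\pet}$ into $\mathcal{I}_{\cent}$ with $\Phi'^{-1}(\mathcal{I}_{\cent})=\mathcal{I}_{\pet}$, so that the descended map $\Phi$ is well defined and injective, and finally that $\Phi$ is surjective. Injectivity of $\Phi'$ is clear, since $u\mapsto v^*_{n\lambda}\otimes u$ is injective on each $V_{n\lambda}$. That $\Phi'$ respects multiplication is exactly \eqref{topmult}: the product of $u_1\in V_{n\lambda}$ and $u_2\in V_{m\lambda}$ in $R_\lambda[\mathcal{B}]$ is the projection $u$ of $u_1\otimes u_2$ onto the Cartan component $V_{(n+m)\lambda}$, while $m(v^*_{n\lambda}\otimes u_1,\,v^*_{m\lambda}\otimes u_2)=v^*_{(n+m)\lambda}\otimes u$ by the chosen normalization of the lowest weight covectors, so $\Phi'(u_1)\Phi'(u_2)=\Phi'(u_1u_2)$.

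For the ideals, recall from the Vinberg construction that $\mathcal{M}_{n\lambda}$ is canonically trivialized over $G\subseteq\overline{G}$, and under this trivialization the section attached to $w^*\otimes u\in V^*_\mu\otimes V_\mu$ (for $\mu\leq n\lambda$) restricts to the matrix coefficient $f^\mu_{w^*,u}$. Since $G^e$ is dense in $\cent$, a homogeneous element of $R_\lambda[\overline{G}]$ lies in $\mathcal{I}_{\cent}$ exactly when the corresponding combination of matrix coefficients vanishes on $G^e$; in particular $\Phi'(u)\in\mathcal{I}_{\cent}$ iff $f^{n\lambda}_{v^*_{n\lambda},u}$ vanishes on $G^e$. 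On the other side, the section of $\mathcal{L}_{n\lambda}$ attached to $u\in V_{n\lambda}$ vanishes on the dense orbit $G^e\cdot\mathfrak{b}^-$ iff $v^*_{n\lambda}(g^{-1}u)=f^{n\lambda}_{v^*_{n\lambda},u}(g^{-1})=0$ for all $g\in G^e$, i.e.\ iff $f^{n\lambda}_{v^*_{n\lambda},u}$ vanishes on $G^e$ (using that $G^e$ is a group). The two conditions coincide, so $\Phi'^{-1}(\mathcal{I}_{\cent})=\mathcal{I}_{\pet}$ and $\Phi$ is well defined and injective. It also follows that the $n$-th graded piece of $R_\lambda[\pet]$ (resp.\ of $R_\lambda[\cent]$) is identified with the subspace $W_n\subseteq\mathbb{C}[G^e]$ spanned by the functions $f^{n\lambda}_{v^*_{n\lambda},u}|_{G^e}$ (resp.\ with the subspace $W'_n$ spanned by all $f^\mu_{w^*,u}|_{G^e}$ with $\mu\leq n\lambda$), so that surjectivity of $\Phi$ is the assertion $W_n=W'_n$ for every $n$.

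To obtain surjectivity I would first reduce to degree one. The algebra $R_\lambda[\overline{G}]$ is generated in degree one, i.e.\ the multiplication maps $\Gamma(\overline{G},\mathcal{M}_\lambda)\otimes\Gamma(\overline{G},\mathcal{M}_{(n-1)\lambda})\to\Gamma(\overline{G},\mathcal{M}_{n\lambda})$ are surjective (projective normality of the wonderful compactification in the embedding given by a regular dominant weight; see \cite{brion}), and hence so is its quotient $R_\lambda[\cent]$. Since $\Phi$ is an algebra homomorphism, it is therefore enough to prove $W'_1\subseteq W_1$: that for every dominant $\mu\leq\lambda$ the restriction to $G^e$ of an arbitrary matrix coefficient $f^\mu_{w^*,u}$ is a linear combination of the $f^\lambda_{v^*_\lambda,u'}|_{G^e}$. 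Two devices help reduce a general matrix coefficient to this shape. First, $f^\nu_{v^*_\nu,v_\nu}$ is identically $1$ on the unipotent group $N\supseteq G^e$ for every dominant $\nu$, so \eqref{topmult} gives $f^\mu_{v^*_\mu,u}|_{G^e}=f^{\mu+\nu}_{v^*_{\mu+\nu},u''}|_{G^e}$ and allows $\mu$ to be enlarged by any dominant weight. Second, since $G^e$ is abelian the algebra $U(\mathfrak{g}^e)=\operatorname{Sym}(\mathfrak{g}^e)$ acts on $V^*_\mu$ commuting with the $G^e$-action, so $f^\mu_{Z\cdot v^*_\mu,u}|_{G^e}=f^\mu_{v^*_\mu,Z^\dagger\cdot u}|_{G^e}$ for $Z\in U(\mathfrak{g}^e)$ (with $Z^\dagger$ its image under the principal anti-automorphism), which transfers any covector lying in $U(\mathfrak{g}^e)\cdot v^*_\mu$ onto the other tensor factor.

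The hard part is to push these reductions through completely: in general $\lambda-\mu$ need not be dominant, and $w^*$ need not lie in $U(\mathfrak{g}^e)\cdot v^*_\mu$. To close this gap I expect to use Kostant's structure theory of the principal nilpotent: $\mathfrak{g}^e$ is abelian of dimension $l$ and admits a basis $e=Y_1,Y_2,\dots,Y_l$ of eigenvectors of the semisimple element $2\rho^\vee$ of a principal $\mathfrak{sl}_2$-triple, with eigenvalues $2,2m_2,\dots,2m_l$, where $m_1=1\leq m_2\leq\cdots\leq m_l$ are the exponents. Working in the induced coordinates $t_1,\dots,t_l$ on $G^e\cong\mathfrak{g}^e$ and grading a monomial $\prod_i t_i^{a_i}$ by $\sum_i a_i m_i$, one computes directly that $f^\mu_{w^*,u}|_{G^e}$ has weighted degree at most $\langle\mu,2\rho^\vee\rangle$, whereas the functions $f^\lambda_{v^*_\lambda,u'}|_{G^e}$ already span everything of weighted degree at most $\langle\lambda,2\rho^\vee\rangle\ge\langle\mu,2\rho^\vee\rangle$ — the point being that $V_\lambda$, being a regular representation, is large enough under the principal grading to realize every such leading monomial. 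This computation, which is also the place where the non-normality of $\pet$ for $l\ge 2$ manifests (the grading by weighted degree being genuinely non-standard), is the technical core; once $W_1=W'_1$ is established, $\Phi$ is surjective and hence an isomorphism of graded algebras.
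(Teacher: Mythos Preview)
The setup---that $\Phi'$ is a graded algebra homomorphism, that $\Phi'^{-1}(\mathcal{I}_{\cent}) = \mathcal{I}_{\pet}$, and hence that $\Phi$ is well defined and injective---is correct and is essentially the paper's argument. The reduction of surjectivity to degree one via projective normality of $\overline{G}$ is also fine (the paper instead proves surjectivity directly in every degree, but your reduction is legitimate).

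The genuine gap is in your proposed proof of $W'_1\subseteq W_1$. Your key claim---that the functions $f^\lambda_{v^*_\lambda,u}\big|_{G^e}$ span \emph{all} polynomials of weighted degree at most $\langle\lambda,2\rho^\vee\rangle$---is false already for $G=PGL_3$ and $\lambda=\theta=\alpha_1+\alpha_2$. Here $\langle\lambda,2\rho^\vee\rangle=4$, and with exponents $(m_1,m_2)=(1,2)$ there are nine monomials in $t_1,t_2$ of weighted degree $\le 4$; but $\dim W_1=\dim\bigl(U(\mathfrak{g}^e)\cdot v^*_\lambda\bigr)=7$, since for instance $e^2Y_2\in U(\mathfrak{g}^e)$ (of weighted degree $4$) already annihilates $v^*_\lambda$, and so does $e^4+3Y_2^2$. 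Thus $W_1$ is a proper subspace of the bounded-weighted-degree polynomials, and the inference ``$f^\mu_{w^*,u}\big|_{G^e}$ has weighted degree $\le 4$, hence lies in $W_1$'' does not go through. (In fact $W'_1=W_1$ holds, but both sit strictly inside the weighted-degree-$\le 4$ space, so a pure degree count cannot separate them.) Your devices (a) and (b) are correct as far as they go, but, as you yourself note, neither suffices: $\lambda-\mu$ need not be dominant, and a general $w^*$ need not lie in $U(\mathfrak{g}^e)\cdot v^*_\mu$.

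What the paper uses to close this gap is not an elementary degree count but two annihilator inclusions coming from Ginzburg's description of the cohomology of affine Grassmannian Schubert varieties: $\text{Ann}_{U(\mathfrak{g}^e)}(v^*_\mu)\subseteq\text{Ann}_{U(\mathfrak{g}^e)}(v^*)$ for every $v^*\in V^*_\mu$ (Lemma~\ref{ann}), and $\text{Ann}_{U(\mathfrak{g}^e)}(v^*_\lambda)\subseteq\text{Ann}_{U(\mathfrak{g}^e)}(v^*_\mu)$ whenever $\mu\le\lambda$ (Lemma~\ref{ginzburgnew}, deduced from the surjection $H^\bullet(\overline{\mathbb{O}}_\lambda)\twoheadrightarrow H^\bullet(\overline{\mathbb{O}}_\mu)$). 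Dualizing, these yield precisely the two reductions $f^\mu_{v^*,u}\big|_{G^e}=f^\mu_{v^*_\mu,w}\big|_{G^e}$ (Proposition~\ref{toprow}) and $f^\mu_{v^*_\mu,w}\big|_{G^e}=f^\lambda_{v^*_\lambda,z}\big|_{G^e}$ (Proposition~\ref{telescope}) that your devices were aiming for. The paper explicitly remarks that no direct algebraic proof of Lemma~\ref{ann} is known, so the hope of replacing this input by Kostant-style principal-grading arguments is not obviously realizable; in any case the specific replacement you sketch does not work.
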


\begin{rk}
The argument that follows can be applied directly to the multi-graded total coordinate rings as well, but this approach is significantly more technical. Choosing a suitable $\mathbb{Z}$-graded homogeneous coordinate ring for each variety circumvents these technicalities.
\end{rk}

\begin{lem}[\cite{ginz}, Corollary 1.6]
\label{ann}
 For any vector $v^*\in V_\mu^*$, one has 
 \[\text{\emph{Ann}}_{\mathcal{U}\mathfrak{g}^e}(v_\mu^*)\subseteq \text{\emph{Ann}}_{\mathcal{U}\mathfrak{g}^e}(v^*).\]
\end{lem}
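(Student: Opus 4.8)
\emph{Proof strategy.}
One inclusion is free of charge: since $\operatorname{Ann}_{\mathcal U\mathfrak g^e}(V_\mu^*)=\bigcap_{w^*\in V_\mu^*}\operatorname{Ann}_{\mathcal U\mathfrak g^e}(w^*)$, one always has $\operatorname{Ann}_{\mathcal U\mathfrak g^e}(V_\mu^*)\subseteq\operatorname{Ann}_{\mathcal U\mathfrak g^e}(v_\mu^*)$, so the assertion of the lemma is precisely the reverse inclusion, i.e.\ the equality $\operatorname{Ann}_{\mathcal U\mathfrak g^e}(v_\mu^*)=\operatorname{Ann}_{\mathcal U\mathfrak g^e}(V_\mu^*)$. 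Because $\mathfrak g^e$ is abelian and acts on $V_\mu^*$ by nilpotent operators, the image $A$ of $\mathcal U\mathfrak g^e$ in $\operatorname{End}(V_\mu^*)$ is a finite-dimensional commutative local $\mathbb C$-algebra with residue field $\mathbb C$, and the statement to be proved becomes: \emph{the cyclic $A$-submodule $A\cdot v_\mu^*\subseteq V_\mu^*$ is free of rank one}, equivalently $\dim_{\mathbb C}\bigl(\mathcal U\mathfrak g^e\cdot v_\mu^*\bigr)=\dim_{\mathbb C}A$, equivalently $\operatorname{Ann}_A(v_\mu^*)=0$.

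The natural setting is the grading coming from a principal $\mathfrak{sl}_2$-triple $(e,h,f)$ with $h=2\rho^\vee$: then $\mathcal U\mathfrak g^e$, $A$, $V_\mu$ and $V_\mu^*$ all carry $\operatorname{ad}h$-gradings, by Kostant's theory $\mathfrak g^e$ is supported in the strictly positive even degrees $2m_1,\dots,2m_l$ (twice the exponents), so $A$ is non-negatively graded with $A_0=\mathbb C$ and its maximal ideal strictly raises the degree. The decisive structural input is that $\mu$ is the unique weight of $V_\mu$ maximizing $\langle\,\cdot\,,\rho^\vee\rangle$ (if $\nu$ is a weight, then $\mu-\nu$ is a non-negative integral combination of simple roots, so $\langle\mu-\nu,\rho^\vee\rangle\ge0$ with equality only when $\nu=\mu$); hence the top $\operatorname{ad}h$-graded piece of $V_\mu$ is the line $\mathbb C v_\mu$, and dually the bottom graded piece of $V_\mu^*$ is exactly $\mathbb C v_\mu^*$. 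Thus $v_\mu^*$ is the unique, up to scalars, lowest-degree vector of $V_\mu^*$, and the orbit map $a\mapsto a\cdot v_\mu^*$ is a morphism of graded $A$-modules $A\to V_\mu^*$ (shifting degrees by $-2\langle\mu,\rho^\vee\rangle$); the sought freeness is exactly its injectivity, which can be tested one graded piece at a time.

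Establishing that injectivity is where the real content sits, and I would attack it by dualizing. Applying $\operatorname{Hom}_{\mathbb C}(-,\mathbb C)$, injectivity of $A\to V_\mu^*$ is equivalent to surjectivity of the $A$-module map $V_\mu\to A^\vee$, $u\mapsto\bigl(a\mapsto\langle v_\mu^*,a\cdot u\rangle\bigr)$, where $A^\vee=\operatorname{Hom}_{\mathbb C}(A,\mathbb C)$ is the injective hull of the residue field; by Nakayama's lemma this is in turn equivalent to surjectivity of the induced map on cotangent spaces $V_\mu/(\mathfrak g^e\cdot V_\mu)\to A^\vee/\mathfrak m A^\vee\cong(\operatorname{soc}A)^\vee$. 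To prove \emph{this} I would invoke Kostant's description of $V_\mu$ as a module over the principal $\mathfrak{sl}_2$: the string decomposition $V_\mu[j]=\ker(e|_{V_\mu})[j]\oplus\operatorname{im}(e|_{V_\mu})[j]$, the fact that $\ker(e|_{V_\mu})$ is concentrated in non-negative $\operatorname{ad}h$-degrees, and the way each of the generators of $\mathfrak g^e$ (spread over the degrees $2m_i$) acts relative to these strings, together pin down both $V_\mu/(\mathfrak g^e\cdot V_\mu)$ and $\operatorname{soc}A$, and a bookkeeping argument in the grading then identifies the map above as an isomorphism.

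The step I expect to be genuinely hard is controlling $\operatorname{soc}A$. The algebra $A$ is in general \emph{not} Gorenstein: already for the adjoint representation of $\mathfrak{sl}_3$ the algebra $A$ is $7$-dimensional with $\dim\operatorname{soc}A=2$ (and $V_\mu$ is not free over $A$), whereas for the representation $V_{2\omega_1}$ of $\mathfrak{sl}_3$ one gets a $6$-dimensional Gorenstein $A$ over which $V_\mu$ \emph{is} free of rank one. Consequently one cannot shortcut the argument by checking that a single socle generator survives multiplication by $v_\mu^*$; one really has to match $V_\mu/(\mathfrak g^e\cdot V_\mu)$ against $(\operatorname{soc}A)^\vee$ in full, which is what forces one through the whole structure theory. (At the opposite extreme, for the defining representation of $\mathfrak{sl}_n$ everything collapses, since $\mathcal U\mathfrak g^e$ then acts through a polynomial ring $\mathbb C[x]/(x^n)$ in one variable and $V_\mu^*$ is a single $\mathfrak{sl}_2$-string with $v_\mu^*$ at its bottom, so the statement is immediate.)
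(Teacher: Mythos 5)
You should first note that the paper contains no proof of this lemma at all: it is imported verbatim from Ginzburg (cited as Corollary~1.6 of that reference), and the remark immediately following it says explicitly that it follows from Ginzburg's results on the cohomology of the loop Grassmannian --- i.e.\ from the isomorphism $H^{\bullet}(\overline{\mathbb{O}}_{\lambda},\mathbb{C})\simeq\mathcal{U}\mathfrak{g}^e/\operatorname{Ann}_{\mathcal{U}\mathfrak{g}^e}(v_\lambda^*)$ recalled later in the same section --- and that the author could not find a direct algebraic proof in the literature. So what you are offering is an attempt at exactly the kind of direct algebraic argument the paper says is not available, and it has to be judged on whether it actually closes.

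It does not. Your reductions are correct as far as they go: the reverse inclusion is trivial, the lemma is equivalent to faithfulness of the image $A$ of $\mathcal{U}\mathfrak{g}^e$ in $\operatorname{End}(V_\mu^*)$ on the cyclic module $A\cdot v_\mu^*$, the grading by the principal semisimple element $h$ places $\mathbb{C}v_\mu^*$ in the bottom degree, and the duality-plus-Nakayama step correctly converts the claim into surjectivity of the induced map $V_\mu/(\mathfrak{g}^e\cdot V_\mu)\to(\operatorname{soc}A)^\vee$. But that surjectivity \emph{is} the lemma, and you leave it as ``a bookkeeping argument in the grading'' based on Kostant's $\mathfrak{sl}_2$-string decomposition of $V_\mu$. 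That input cannot suffice: the string decomposition only records the action of the single generator $e\in\mathfrak{g}^e$, whereas both $\operatorname{soc}A$ and the coinvariants $V_\mu/(\mathfrak{g}^e\cdot V_\mu)$ depend essentially on how the remaining generators of $\mathfrak{g}^e$, sitting in degrees $2m_2,\dots,2m_l$, act and interact; your own $\mathfrak{sl}_3$ examples (the $7$-dimensional, non-Gorenstein $A$ with $2$-dimensional socle for the adjoint representation versus the $6$-dimensional Gorenstein $A$ for $V_{2\omega_1}$) already exhibit exactly the phenomena the strings do not determine. So the proposal is a correct reformulation plus an unproved core step, not a proof; as written, the only way to complete it is to appeal to the geometric theorem of Ginzburg --- which is precisely what the paper does by citation rather than proof.
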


\begin{rk}
This lemma follows from the following result of Ginzburg on the cohomology of the loop Grassmannian. We were unable to find a direct algebraic proof in the literature.
\end{rk}

\begin{thm} [\cite{ginz}, Theorem 1.5] 
\label{affgrass} Let $\mathbb{O}_\lambda$ be the orbit of the affine Grassmannian of the Langlands dual $\check{G}$ of $G$ corresponding to the dominant weight $\lambda$ of $G$. Then there is a natural isomorphism of graded algebras
\begin{align}
H^{\bullet}(\overline{\mathbb{O}}_{\lambda},\mathbb{C})\simeq\mathcal{U}\mathfrak{g}^e/\text{\emph{Ann}}_{\mathcal{U}\mathfrak{g}^e}(v_\lambda^*).
\end{align}
\end{thm}

\begin{prop}
\label{toprow}
Let $v^*\otimes u\in V_\mu^*\otimes V_\mu$. Then there exists an element $w\in V_\mu$ such that for any $g\in G^e$, 
\[v^*(g\cdot u)=v_\mu^*(g\cdot w).\]
\end{prop}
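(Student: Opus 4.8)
The plan is to use that $G^e$ is unipotent, so that an identity of matrix coefficients on $G^e$ is equivalent to an identity involving the enveloping algebra $\mathcal{U}\mathfrak{g}^e$ acting on a single vector --- precisely the situation Lemma~\ref{ann} governs. First I would move to the contragredient picture: for $g\in G^e$ one has $v^*(g\cdot u)=(g^{-1}\cdot v^*)(u)$, where $G$ acts on $V_\mu^*$ by the dual representation, and since $g\mapsto g^{-1}$ is a bijection of $G^e$, it suffices to exhibit $w\in V_\mu$ with $(g\cdot v^*)(u)=(g\cdot v_\mu^*)(w)$ for all $g\in G^e$. Writing $g=\exp X$ with $X\in\mathfrak{g}^e$ and expanding $g\cdot\phi=\sum_{n\ge 0}\tfrac1{n!}X^n\cdot\phi$ (a finite sum, since $G^e$ acts on $V_\mu^*$ by unipotent operators), both $g\cdot v^*$ and $g\cdot v_\mu^*$ land in the finite-dimensional cyclic $\mathcal{U}\mathfrak{g}^e$-submodules $P:=\mathcal{U}\mathfrak{g}^e\cdot v^*$ and $P_0:=\mathcal{U}\mathfrak{g}^e\cdot v_\mu^*$ of $V_\mu^*$.

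Now I would invoke Lemma~\ref{ann}. The containment $\mathrm{Ann}_{\mathcal{U}\mathfrak{g}^e}(v_\mu^*)\subseteq\mathrm{Ann}_{\mathcal{U}\mathfrak{g}^e}(v^*)$ says exactly that $Y\cdot v_\mu^*\mapsto Y\cdot v^*$ is a well-defined surjective homomorphism of $\mathcal{U}\mathfrak{g}^e$-modules $\pi\colon P_0\to P$; applying $\pi$ termwise to the exponential series gives $\pi(g\cdot v_\mu^*)=g\cdot v^*$ for all $g\in G^e$. To finish, I would note that $p\mapsto(\pi(p))(u)$ is a linear functional on the finite-dimensional space $P_0\subseteq V_\mu^*$, hence extends to a functional on $V_\mu^*$, i.e. is pairing against some $w\in V_\mu=(V_\mu^*)^*$. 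For this $w$ one gets $(g\cdot v_\mu^*)(w)=(\pi(g\cdot v_\mu^*))(u)=(g\cdot v^*)(u)$ for all $g\in G^e$, and undoing the reduction above yields $v^*(g\cdot u)=v_\mu^*(g\cdot w)$, as required.

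Given Lemma~\ref{ann}, there is no real obstacle; the main thing to get right is the module-theoretic reformulation so that the lemma applies in exactly the stated form --- in particular keeping the dual action straight and justifying the passage between the $G^e$-action and the $\mathcal{U}\mathfrak{g}^e$-action, which is legitimate because $G^e$ is unipotent and hence connected. Alternatively one can run the same argument with $P,P_0$ the linear spans of the $G^e$-orbits of $v^*$ and $v_\mu^*$ and $\pi$ the $G^e$-equivariant map sending $v_\mu^*$ to $v^*$, whose existence is again exactly the content of Lemma~\ref{ann}.
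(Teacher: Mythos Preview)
Your argument is correct and rests on the same two pillars as the paper's proof: Lemma~\ref{ann} to control annihilators in $\mathcal{U}\mathfrak{g}^e$, and unipotence of $G^e$ to pass between the enveloping algebra and the group via the exponential. The packaging, however, is different. The paper truncates $\mathcal{U}\mathfrak{g}^e$ to a finite-dimensional graded piece $\mathcal{U}^{\le M}\mathfrak{g}^e$ using the $h$-grading, dualizes the representation map, and compares cokernels of $\varphi^*$ and its restriction $\psi^*$ by dualizing back and comparing kernels. You instead pass to the contragredient action on $V_\mu^*$, read Lemma~\ref{ann} directly as the existence of a $\mathcal{U}\mathfrak{g}^e$-module surjection $\pi\colon \mathcal{U}\mathfrak{g}^e\cdot v_\mu^*\twoheadrightarrow \mathcal{U}\mathfrak{g}^e\cdot v^*$, and then produce $w$ by extending the functional $p\mapsto(\pi(p))(u)$ from the finite-dimensional cyclic submodule to all of $V_\mu^*$. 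Your route is somewhat more economical: it avoids the auxiliary $h$-grading and the double dualization, and makes transparent that the content of Lemma~\ref{ann} is exactly the existence of $\pi$. The paper's formulation, on the other hand, sets up the cokernel/kernel comparison in a way that is reused verbatim in Propositions~\ref{bijection} and~\ref{telescope}.
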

\begin{proof}
We will first show this for linear functions on the universal enveloping algebra $\mathcal{U}\mathfrak{g}^e$ of the nilpotent abelian subalgebra $\mathfrak{g}^e=\text{Lie}(G^e)$ of $\mathfrak{g}.$ 

Let $v_1,\ldots, v_r$ be a basis of weight vectors for $V_\mu$, and let $v_1^*,\ldots,v_r^*$ be the dual basis for $V_\mu^*$. One can make this choice so that $v_1^*=v_\mu^*$. Then the representation map is
\begin{align*}
\varphi:\mathcal{U}\mathfrak{g}^e&\longrightarrow V_\mu\otimes V_\mu^*\\
x&\longmapsto\sum v_i^*(x\cdot v_j)v_i\otimes v_j^*.
\end{align*}

 Let $\{e,h,f\}$ be a principal $\mathfrak{sl}_2$-triple in $\mathfrak{g}$---this triple is unique up to conjugation by $G^e$, and the element $h$ is regular and semisimple, with $[h,e]=2e$ (see \cite{kost1}.) Then $\mathfrak{g}$, the universal enveloping algebra $\mathcal{U}\mathfrak{g}$, and the vector space $V_\mu$ all have natural $\mathbb{Z}$-gradings by the eigenvalues of $h$, and $\mathfrak{g}^e$ sits in strictly positive degrees. 

The map 
\[\mathcal{U}\mathfrak{g}^e\longrightarrow\text{End}(V_\mu)\]
is $\mathbb{Z}$-graded---if $x\in\mathcal{U}\mathfrak{g}$ has degree $m$, and $v\in V_\mu$ has degree $k$, then $x\cdot v\in V_\mu$ has degree $m+k$. Therefore, for any $m$ greater than the maximum eigenvalue $M$ of $h$ on $\text{End}(V_\mu)=V_\mu\otimes V_\mu^*$, we have
\[\varphi\vert_{\mathcal{U}^{ m}\mathfrak{g}^e}=0,\]
where $\mathcal{U}^{m}\mathfrak{g}^e$ denotes the component of $\mathcal{U}\mathfrak{g}^e$ of degree $m$.
So without loss of generality we can restrict to considering
\[\varphi:\mathcal{U}^{\leq M}\mathfrak{g}^e\longrightarrow V_\mu\otimes V_\mu^*.\]
Since all of these spaces are finite-dimensional, we will be able to dualize without issue.

The dual map $\varphi^*:V_\mu^*\otimes V_\mu\longrightarrow (\mathcal{U}^{\leq M}\mathfrak{g}^e)^*$ realizes the elements of $V_\mu^*\otimes V_\mu$ as functions on the universal enveloping algebra, via
\[\varphi^*(v_i^*\otimes v_j)(x)=v_i^*(x\cdot v_j)\quad\text{ for any } x\in \mathcal{U}^{\leq M}\mathfrak{g}^e.\]
Consider the commutative diagram
\begin{equation}
\label{first}
\begin{tikzcd}
  V_\mu^*\otimes V_\mu\arrow{r}{\varphi^*}&(\mathcal{U}^{\leq M}\mathfrak{g}^e)^*\\
 v_\mu^*\otimes V_\mu\arrow[hook]{u}{}\arrow{ur}{\psi^*}
\end{tikzcd}
\end{equation}
where the vertical map is the inclusion induced by 
\[\mathbb{C}v_\mu^*\longhookrightarrow V_\mu^*,\]
and $\psi^*$ is the restriction of $\varphi^*$ to the subspace $v_\mu^*\otimes V_\mu$. 

We would like to first show that every function in $V_\mu^*\otimes V_\mu$ on $\mathcal{U}^{\leq M}\mathfrak{g}^e$ comes from a function in $v_\mu^*\otimes V_\mu$---that is, that the image of $\varphi^*$ is equal to the image of $\psi^*$, or in other words that  
\[\text{coker}(\varphi^*)=\text{coker}(\psi^*).\]
Dualizing diagram \eqref{first}, we obtain
\begin{equation}
\label{second}
\begin{tikzcd}
  V_\mu\otimes V_\mu^*\arrow{d}{}&\mathcal{U}^{\leq M}\mathfrak{g}^e\arrow{l}{\varphi}\arrow{dl}{\psi}\\
 v_\mu\otimes V_\mu^*
\end{tikzcd}
\end{equation}
and it is now equivalent to show that 
\[\text{ker}(\varphi)=\text{ker}(\psi).\]
Since diagram \eqref{second} is commutative, $\text{ker}(\varphi)\subseteq\text{ker}(\psi).$ Conversely, if $x\in\text{ker}(\psi)$, then
\[\psi(x)=\sum_{i} v_\mu^*(x\cdot v_i) v_\mu\otimes v_i^*=0\]
and so $v_\mu^*(x\cdot v_i)=0$ for each $v_i$. But then $x\cdot v_\mu^*=0$ and so by Lemma \ref{ann} the element $x$ annihilates every $v^*\in V_\mu^*$, so $x\in\text{ker}(\varphi)$. 

Thus $\text{ker}(\varphi)=\text{ker}(\psi)$, and in diagram \eqref{first} 
\[\text{coker}(\varphi^*)=\text{coker}(\psi^*).\]
In other words, for any $v^*\otimes u\in V_\mu^*\otimes V_\mu$, there is a $w\in V_\mu$ such that for any $x\in \mathcal{U}\mathfrak{g}^e$,
\[v^*(x\cdot u)=v_\mu^*(x\cdot w).\]

From this we can obtain the same result for functions on the group $G$. Because $G^e$ is a unipotent group and $V_\mu$ is a finite-dimensional representation, it is a general fact that $\varphi(G^e)\subset\varphi(\mathcal{U}\mathfrak{g}^e)$. So for any $g\in G^e$ we have
\[v^*(g\cdot u)=v_\mu^*(g\cdot w).\qedhere\]
\end{proof}

\begin{rk}
\label{4.5}
Proposition \ref{toprow} tells us that the ideal $\mathcal{I}_{\cent}$ contains, in each graded component 
\[\left(\bigoplus_{\mu\leq n\lambda}V^*_\mu\otimes V_\mu\right)t^{n\lambda},\]
all elements of the form 
\[(f^\mu_{v^*,u}-f^\mu_{v_\mu^*,w})t^{n\lambda}\]
for $v^*$, $u$, and $w$ as above. 
\end{rk}

We prove two more results that partially reverse the correspondence in Proposition \ref{toprow} and that will be useful in Section \ref{centralizers}.

\begin{lem}
\label{annihilators}
Let $v^*\in V_\mu^*$ be such that $v^*(v_\mu)\neq0$. Then
\[\text{\emph{Ann}}_{\mathcal{U}\mathfrak{g}^e}(v_\mu^*) =\text{\emph{Ann}}_{\mathcal{U}\mathfrak{g}^e}(v^*).\]
\end{lem}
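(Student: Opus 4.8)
The inclusion $\text{Ann}_{\mathcal{U}\mathfrak{g}^e}(v_\mu^*)\subseteq\text{Ann}_{\mathcal{U}\mathfrak{g}^e}(v^*)$ is exactly Lemma~\ref{ann}, so the content of the statement is the reverse inclusion, and the plan is to extract it from the principal $\mathbb{Z}$-grading already used in the proof of Proposition~\ref{toprow}. Fix $h\in\mathfrak{g}$ regular semisimple with $[h,e]=2e$, and grade $\mathcal{U}\mathfrak{g}^e$ and $V_\mu^*$ by $h$-eigenvalues, so that $\mathfrak{g}^e$ sits in strictly positive degrees. Write $M=\langle\mu,h\rangle$ for the top $h$-eigenvalue on $V_\mu$; then $V_\mu^*$ is supported in $h$-degrees $\geq -M$, and, because $\langle\alpha_i,h\rangle=2>0$ for every simple root, the bottom $h$-eigenspace $V_\mu^*[-M]$ coincides with the (one-dimensional) lowest weight space $\mathbb{C}v_\mu^*$. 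I would first record the elementary translation of the hypothesis: decompose $v^*=c\,v_\mu^*+v'$ with $c\in\mathbb{C}$ and $v'$ supported in degrees $\geq -M+1$; since the pairing $V_\mu^*\otimes V_\mu\to\mathbb{C}$ is perfect on each matched pair $V_\mu^*[-d]\otimes V_\mu[d]$ and $v_\mu\in V_\mu[M]$, one has $v^*(v_\mu)=c\,v_\mu^*(v_\mu)$ with $v_\mu^*(v_\mu)\neq 0$, so the hypothesis $v^*(v_\mu)\neq 0$ is equivalent to $c\neq 0$.

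The core of the argument is a leading-term computation together with a peeling induction. Take $x\in\text{Ann}_{\mathcal{U}\mathfrak{g}^e}(v^*)$ and write $x=\sum_{m\geq m_0}x_m$ in homogeneous components with $x_{m_0}\neq 0$. In the expansion $x\cdot v^*=c\,(x\cdot v_\mu^*)+x\cdot v'$, the summand $x\cdot v_\mu^*$ lies in degrees $\geq -M+m_0$, with its degree-$(-M+m_0)$ component equal to $x_{m_0}\cdot v_\mu^*$, whereas $x\cdot v'$ lies in degrees $\geq -M+m_0+1$. Comparing degree-$(-M+m_0)$ components in $x\cdot v^*=0$ and using $c\neq 0$ gives $x_{m_0}\cdot v_\mu^*=0$. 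By Lemma~\ref{ann} this forces $x_{m_0}\cdot v^*=0$ as well, so $x-x_{m_0}$ again lies in $\text{Ann}_{\mathcal{U}\mathfrak{g}^e}(v^*)$ and has strictly fewer nonzero homogeneous components; by induction on the number of components, every $x_m$ annihilates $v_\mu^*$, whence $x\cdot v_\mu^*=0$, i.e. $x\in\text{Ann}_{\mathcal{U}\mathfrak{g}^e}(v_\mu^*)$.

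The main obstacle is precisely that $\text{Ann}_{\mathcal{U}\mathfrak{g}^e}(v^*)$ is not a graded ideal for general $v^*$ (it is graded for the weight vector $v_\mu^*$, which is what makes $\text{Ann}(v_\mu^*)$ easy to understand), so one cannot simply compare the two annihilators degree by degree. The leading-term/peeling device is what circumvents this, and it relies essentially on two inputs: the one-dimensionality of the extreme weight space $V_\mu^*[-M]$, which is what lets the nonvanishing of $v^*(v_\mu)$ pin down exactly the coefficient $c$ of $v_\mu^*$, and the availability of Lemma~\ref{ann} to feed each peeled-off homogeneous component back into the inductive step. As elsewhere in this section, Lemma~\ref{ann} is the genuinely deep ingredient; the rest is bookkeeping with the principal grading.
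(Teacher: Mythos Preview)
Your proof is correct and follows essentially the same approach as the paper: both use the principal $h$-grading, write $v^*=c\,v_\mu^*+(\text{higher terms})$, and extract from $x\cdot v^*=0$ that the lowest homogeneous component of $x$ annihilates $v_\mu^*$ by comparing bottom degrees. The only cosmetic difference is the packaging of the conclusion---the paper phrases the endgame as ``$\text{gr}\,\iota$ is surjective, hence $\iota$ is surjective,'' whereas you make the underlying induction explicit by peeling off $x_{m_0}$ (using Lemma~\ref{ann} again to keep $x-x_{m_0}$ in the annihilator) and iterating; these are the same argument.
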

\begin{proof} From Lemma \ref{ann} there is an inclusion,
\[\iota: \text{Ann}_{\mathcal{U}\mathfrak{g}^e}(v_\mu^*) \longhookrightarrow \text{Ann}_{\mathcal{U}\mathfrak{g}^e}(v^*).\]
As in the proof of Proposition \ref{toprow}, the space $V^*_\mu$ is $\mathbb{Z}$-graded and the algebras $\mathcal{U}\mathfrak{g}^e$ and $\text{Ann}_{\mathcal{U}\mathfrak{g}^e}(v_\mu^*)$ are $\mathbb{N}$-graded by the eigenvalues of $h$. For $m\in\mathbb{N}$, the collection
\[\mathcal{U}^{\geq m}\mathfrak{g}^e:=\bigoplus_{i\geq m} \mathcal{U}^i\mathfrak{g}^e.\]
is a decreasing filtration that induces decreasing filtrations on both $\text{Ann}_{\mathcal{U}\mathfrak{g}^e}(v_\mu^*)$ and $\text{Ann}_{\mathcal{U}\mathfrak{g}^e}(v^*)$. It is sufficient to show that the induced map 
\[\text{gr}\iota: \text{gr}(\text{Ann}_{\mathcal{U}\mathfrak{g}^e}(v_\mu^*))=\text{Ann}_{\mathcal{U}\mathfrak{g}^e}(v_\mu^*) \longhookrightarrow \text{gr}(\text{Ann}_{\mathcal{U}\mathfrak{g}^e}(v^*))\]
on associated graded algebras is surjective.

Let $k$ be the degree of $v_\mu^*$ under the grading---this is the minimal eigenvalue of $h$ on $V_\mu^*$. Then we have 
\[v^*=cv_\mu^*+w^*\]
for a nonzero constant $c$ and for $w^*$ sitting in degrees strictly higher than $k$. Let 
\[x\in\text{gr}(\text{Ann}_{\mathcal{U}\mathfrak{g}^e}(v^*))_n\subset\mathcal{U}^{\geq m}\mathfrak{g}^e/\mathcal{U}^{\geq m+1}\mathfrak{g}^e\] be nontrivial, with representative $x'\in \mathcal{U}^{\geq m}\mathfrak{g}^e$. We write 
\[x'=x^{(m)}+x'',\]
where $x^{(m)}$ is a nonzero element in degree $m$ and $x''$ sits in degree strictly higher than $m$. Then
\[0=x'\cdot v^*=cx'\cdot v_\mu^*+x'\cdot w^*=cx^{(m)}\cdot v_\mu^*+cx''\cdot v_\mu^*+x^{(m)}\cdot w^*+x''\cdot w^*.\]
The first term has degree $m+k$, and all the other terms sit in strictly higher degrees, so we must have
\[x^{(m)}\cdot v_\mu^*=0.\]
Therefore, $x^{(m)}\in\text{Ann}_{\mathcal{U}\mathfrak{g}^e}(v_\mu^*)$ is such that 
\[\text{gr}\iota\left(x^{(m)}\right)=x.\qedhere\]
\end{proof}

\begin{prop}
\label{bijection}
Let $w\in V_\mu$, and let $v^*$ be as in Lemma \ref{annihilators}. Then there exists an element $u\in V_\mu$ such that for any $g\in G^e$,
\[v_\mu^*(g\cdot w)=v^*(g\cdot u).\]
\end{prop}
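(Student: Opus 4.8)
The statement of Proposition~\ref{bijection} is, in a precise sense, the inverse of Proposition~\ref{toprow}: given a ``top-row'' matrix entry $f^\mu_{v_\mu^*,w}$ and a vector $v^*$ that pairs nontrivially with the highest weight vector, we want to realize $f^\mu_{v_\mu^*,w}$ as a matrix entry $f^\mu_{v^*,u}$ for a suitable $u$, when restricted to $G^e$. The plan is to mimic the proof of Proposition~\ref{toprow} almost verbatim, but replacing the input of Lemma~\ref{ann} by the stronger Lemma~\ref{annihilators}, which is exactly what lets the correspondence go in the other direction.

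\textbf{Key steps.} First I would reduce, as in Proposition~\ref{toprow}, from functions on the group $G^e$ to linear functionals on $\mathcal{U}\mathfrak{g}^e$: pick a regular semisimple $h$ with $[h,e]=2e$, use the $h$-grading on $V_\mu$ to see that the representation map $\varphi\colon\mathcal{U}\mathfrak{g}^e\to V_\mu^*\otimes V_\mu$ kills all of $\mathcal{U}^{>M}\mathfrak{g}^e$ (where $M$ bounds the eigenvalues of $h$ on $V_\mu$), so it suffices to work with the finite-dimensional $\varphi\colon\mathcal{U}^{\le M}\mathfrak{g}^e\to V_\mu^*\otimes V_\mu$ and to dualize freely. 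The exponential argument of \eqref{xtog} then transports any identity of linear functionals on $\mathcal{U}\mathfrak{g}^e$ to the corresponding identity of functions on $G^e$, so I can focus entirely on the enveloping-algebra statement. Second, I would fix the vector $v^*$ with $v^*(v_\mu)\ne 0$ and consider the map $\psi^{v^*}\colon\mathcal{U}^{\le M}\mathfrak{g}^e\to v^*\otimes V_\mu$ given by composing $\varphi$ with the projection $V_\mu^*\otimes V_\mu\to \mathbb{C}v^*\otimes V_\mu$ dual to the inclusion $\mathbb{C}v^*\hookrightarrow V_\mu^*$ (the role played by $\psi$ in diagram~\eqref{second}, but now with $v^*$ in place of $v_\mu^*$). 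Concretely $\psi^{v^*}(x)=\sum_i v^*(x\cdot v_i)\,v^*\otimes v_i$, so $\ker\psi^{v^*}=\mathrm{Ann}_{\mathcal{U}\mathfrak{g}^e}(v^*)$; similarly $\ker\psi=\mathrm{Ann}_{\mathcal{U}\mathfrak{g}^e}(v_\mu^*)$ with the notation of \eqref{second}. Third, Lemma~\ref{annihilators} gives exactly $\mathrm{Ann}_{\mathcal{U}\mathfrak{g}^e}(v_\mu^*)=\mathrm{Ann}_{\mathcal{U}\mathfrak{g}^e}(v^*)$, so $\ker\psi=\ker\psi^{v^*}$. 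Dualizing, the two quotient maps $\psi^*$ and $(\psi^{v^*})^*$ have the same image inside $(\mathcal{U}^{\le M}\mathfrak{g}^e)^*$. Since $\psi^*$ sends $v_\mu^*\otimes w$ to the functional $x\mapsto v_\mu^*(x\cdot w)$, that functional lies in the image of $(\psi^{v^*})^*$, i.e.\ there is $u\in V_\mu$ with $v_\mu^*(x\cdot w)=v^*(x\cdot u)$ for all $x\in\mathcal{U}\mathfrak{g}^e$; feeding this through \eqref{xtog} yields $v_\mu^*(g\cdot w)=v^*(g\cdot u)$ for all $g\in G^e$, as desired.

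\textbf{Main obstacle.} There is no serious analytic or combinatorial difficulty here; the content is entirely packaged in Lemma~\ref{annihilators}, whose proof is already given. The one point requiring care is bookkeeping: in Proposition~\ref{toprow} the asymmetry (an inclusion of cokernels, hence only one direction) came from having only the inclusion $\mathrm{Ann}(v_\mu^*)\subseteq\mathrm{Ann}(v^*)$ of Lemma~\ref{ann}, whereas here the equality of annihilators from Lemma~\ref{annihilators} forces equality of the two images and hence surjectivity in the needed direction. I would be explicit that $\psi^{v^*}$ has kernel exactly $\mathrm{Ann}_{\mathcal{U}\mathfrak{g}^e}(v^*)$ — this uses that the span of $\{v^*\otimes v_i\}_i$ is a full copy of $V_\mu$, so $\psi^{v^*}(x)=0$ iff $v^*(x\cdot v_i)=0$ for all $i$ iff $x\cdot v^*=0$ as an element of $V_\mu^*$ under the coadjoint-type action — and that finite-dimensionality (guaranteed by the $h$-grading truncation) makes the double dualization legitimate. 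Once those two points are spelled out, the proof is a direct transcription of the Proposition~\ref{toprow} argument with Lemma~\ref{ann} upgraded to Lemma~\ref{annihilators}.
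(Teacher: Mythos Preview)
Your proposal is correct and follows essentially the same route as the paper: reduce to the truncated enveloping algebra via the $h$-grading, identify the kernels of the two ``row'' maps $\mathcal{U}^{\le M}\mathfrak{g}^e\to v_\mu^*\otimes V_\mu$ and $\mathcal{U}^{\le M}\mathfrak{g}^e\to v^*\otimes V_\mu$ with the respective annihilators, invoke Lemma~\ref{annihilators} to compare them, dualize to compare images, and pass back to $G^e$ via the exponential. The only cosmetic difference is that the paper phrases the argument as showing $\mathrm{Im}(\varphi^*)=\mathrm{Im}(\psi^*)$ and cites Proposition~\ref{toprow} for one of the two inclusions, whereas you obtain both inclusions at once from the equality of annihilators in Lemma~\ref{annihilators}; this is the same argument.
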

\begin{proof}
Let $\varphi$ be the representation map from the proof of Proposition \ref{toprow}, and consider the restriction $\varphi_{res}^*$ of $\varphi^*$ to $v^*\otimes V_\mu$. 
\begin{equation*}
\begin{tikzcd}
  v^*\otimes V_\mu\arrow{r}{\varphi_{res}^*}&(\mathcal{U}^{\leq M}\mathfrak{g}^e)^*\\
 v_\mu^*\otimes V_\mu\arrow{ur}{\psi^*}
\end{tikzcd}
\end{equation*}
We would like to show that 
\[\text{Im}(\varphi_{res}^*)=\text{Im}(\psi^*).\]

The first inclusion already follows from \ref{toprow}, and to show the second it is sufficient to show that  
\[\text{ker}(\varphi_{res})\subset\text{ker}(\psi)\]
in the following diagram, where $v\in V_\mu$ is the dual vector to $v^*$ under the choice of weight vector basis in the proof of Proposition \ref{toprow}:
\begin{equation*}
\begin{tikzcd}
  v\otimes V_\mu^*&\mathcal{U}^{\leq M}\mathfrak{g}^e\arrow{l}{\varphi_{res}}\arrow{dl}{\psi}\\
 v_\mu\otimes V_\mu^*
\end{tikzcd}
\end{equation*}
If $x\in\text{ker}(\varphi_{res})$, then 
\[\varphi_{res}(x)=\sum_{i} v^*(x\cdot v_i) v\otimes v_i^*=0\]
and so $v^*(x\cdot v_i)=0$ for each $v_i$. Then $x\cdot v^*=0$ and by Lemma \ref{annihilators} the element $x$ annihilates $v_\mu^*$, so $x\in\text{ker}(\psi)$.

As in the proof of Proposition \ref{toprow}, since $\text{Im}(\varphi_{res}^*)=\text{Im}(\psi^*)$, it follows that there is an element $u\in V_\mu$ such that 
\[v_\mu^*(g\cdot w)=v^*(g\cdot u)\qquad\text{for any $g\in G^e$.}\qedhere\]
\end{proof}

Next we will show that if $\mu\leq\lambda$, then every function $f^\mu_{v_\mu^*,w}\in v_\mu^*\otimes V_\mu$ on $G^e$ is equivalent to a function $f^\lambda_{v_\lambda^*,z}$. For this we will need a result similar to Corollary \ref{ann}, and it will follow from the same theorem of Ginzburg:

\begin{lem}
\label{ginzburgnew}
Let $\mu\leq\lambda$ be dominant weights. Then
\[\text{\emph{Ann}}_{\mathcal{U}\mathfrak{g}^e}(v_\lambda^*)\subseteq \text{\emph{Ann}}_{\mathcal{U}\mathfrak{g}^e}(v_\mu^*).\]
\end{lem}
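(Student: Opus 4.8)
The plan is to derive this from the theorem of Ginzburg just quoted, by comparing the cohomology rings of $\overline{\mathbb O}_\lambda$ and $\overline{\mathbb O}_\mu$ along the inclusion of one orbit closure into the other. Recall that the $\check G(\mathcal O)$-orbit $\mathbb O_\mu$ lies in the closure of $\mathbb O_\lambda$ exactly when $\lambda-\mu$ is a non-negative integral combination of positive roots of $G$; since $\lambda$ and $\mu$ are dominant weights in the root lattice (Remark~\ref{choices}) this is precisely the hypothesis $\mu\le\lambda$. So there is a closed embedding $j\colon\overline{\mathbb O}_\mu\hookrightarrow\overline{\mathbb O}_\lambda$, and pulling back classes gives a homomorphism of graded algebras $j^*\colon H^\bullet(\overline{\mathbb O}_\lambda,\mathbb C)\to H^\bullet(\overline{\mathbb O}_\mu,\mathbb C)$ carrying the unit to the unit.

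First I would record that the isomorphism \eqref{affgrass} is not an accident of a single $\lambda$ but the shadow of a canonical construction: there is a natural $\mathcal U\mathfrak g^e$-action on the cohomology of each orbit closure, compatible with the restriction maps induced by inclusions of orbit closures, under which \eqref{affgrass} becomes an isomorphism of $\mathcal U\mathfrak g^e$-modules sending $1\in\mathcal U\mathfrak g^e$ to the unit $1\in H^0$. Write $p_\nu\colon\mathcal U\mathfrak g^e\twoheadrightarrow H^\bullet(\overline{\mathbb O}_\nu,\mathbb C)$ for the composite of the quotient map with \eqref{affgrass}, so that $p_\nu$ is a surjection of $\mathcal U\mathfrak g^e$-modules (indeed of algebras), with $p_\nu(1)=1$ and $\ker p_\nu=\mathrm{Ann}_{\mathcal U\mathfrak g^e}(v_\nu^*)$.

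Granting this, the conclusion is immediate. The map $j^*$ is $\mathcal U\mathfrak g^e$-linear and sends $1$ to $1$, so $j^*\circ p_\lambda\colon\mathcal U\mathfrak g^e\to H^\bullet(\overline{\mathbb O}_\mu,\mathbb C)$ is a $\mathcal U\mathfrak g^e$-linear map sending $1$ to $1$; since $H^\bullet(\overline{\mathbb O}_\mu,\mathbb C)$ is generated as a $\mathcal U\mathfrak g^e$-module by $1$, this forces $j^*\circ p_\lambda=p_\mu$. Hence
\[\mathrm{Ann}_{\mathcal U\mathfrak g^e}(v_\lambda^*)=\ker p_\lambda\subseteq\ker(j^*\circ p_\lambda)=\ker p_\mu=\mathrm{Ann}_{\mathcal U\mathfrak g^e}(v_\mu^*),\]
which is the claim.

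The step that requires care is the middle one — knowing that $j^*$ intertwines the $\mathcal U\mathfrak g^e$-actions on the two cohomology rings, equivalently that $j^*\circ p_\lambda=p_\mu$. The cleanest way to extract this from Ginzburg's framework is to factor everything through the cohomology of the whole affine Grassmannian: the $\mathcal U\mathfrak g^e$-action and the isomorphisms of the theorem arise by restriction from $H^\bullet(\mathrm{Gr}_{\check G},\mathbb C)$, and since $\overline{\mathbb O}_\mu\subset\overline{\mathbb O}_\lambda\subset\mathrm{Gr}_{\check G}$ the restriction $H^\bullet(\mathrm{Gr}_{\check G},\mathbb C)\to H^\bullet(\overline{\mathbb O}_\mu,\mathbb C)$ factors through $H^\bullet(\overline{\mathbb O}_\lambda,\mathbb C)$, which gives the required compatibility. (If one prefers to stay inside the orbit closures, an alternative is to check directly that $j^*$ is surjective — both $\overline{\mathbb O}_\lambda$ and $\overline{\mathbb O}_\mu$ have cohomology concentrated in even degrees — and that $j^*$ respects the module structures, and then argue as above.)
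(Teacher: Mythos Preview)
Your argument is correct and follows the same route as the paper: both use Ginzburg's theorem together with the inclusion $\overline{\mathbb O}_\mu\subset\overline{\mathbb O}_\lambda$ and the induced restriction map on cohomology. The only difference is emphasis---the paper argues that $j^*$ is surjective (via the compatible affine pavings) and then reads off the kernel inclusion, whereas you deduce $j^*\circ p_\lambda=p_\mu$ from naturality of the $\mathcal U\mathfrak g^e$-action and cyclicity, which is slightly cleaner since it does not require surjectivity of $j^*$; you mention the paper's surjectivity argument as your parenthetical alternative.
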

\begin{proof}
The orbits of $\check{G}(\mathbb{C}[[t]])$ on the affine Grassmannian $\check{G}(\mathbb{C}((t)))/\check{G}(\mathbb{C}[[t]])$ are indexed by the dominant weights of $G$, and since $\mu\leq\lambda$, we have
\[\mathbb{O}_\mu\subset\overline{\mathbb{O}}_\lambda.\]
(See Theorem 2.17 in \cite{schmitt}.)

The induced restriction map on cohomology
\[H^{\bullet}(\overline{\mathbb{O}}_{\lambda},\mathbb{C})\longrightarrow H^{\bullet}(\overline{\mathbb{O}}_{\mu},\mathbb{C})\]
is surjective since $\overline{\mathbb{O}_\lambda}$ and $\overline{\mathbb{O}_\mu}$ have compatible decompositions into affine strata. In view of Theorem \ref{affgrass}, this gives a surjection
\[\mathcal{U}\mathfrak{g}^e/\text{Ann}_{\mathcal{U}\mathfrak{g}^e}(v_\lambda^*)\longrightarrow\mathcal{U}\mathfrak{g}^e/\text{Ann}_{\mathcal{U}\mathfrak{g}^e}(v_\mu^*),\]
and implies that 
\[\text{Ann}_{\mathcal{U}\mathfrak{g}^e}(v_\lambda^*)\subseteq \text{Ann}_{\mathcal{U}\mathfrak{g}^e}(v_\mu^*).\qedhere\]
\end{proof}

\begin{prop}
\label{telescope}
Let  $\mu\leq\lambda$ and $v_\mu^*\otimes w\in V_\mu^*\otimes V_\mu$. Then there is an element $z\in V_\lambda$ such that for any $g\in G^e$,
\[v_\mu^*(g\cdot w)=v_\lambda^*(g\cdot z).\]
\end{prop}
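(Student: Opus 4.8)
The plan is to mimic the argument of Proposition \ref{toprow}, but now comparing the functional $v_\mu^*$ on $V_\mu$ with the functional $v_\lambda^*$ on $V_\lambda$, using Lemma \ref{ginzburgnew} in place of Lemma \ref{ann}. As before I first work at the level of the universal enveloping algebra. Pick the regular semisimple $h$ with $[h,e]=2e$, so that $\mathcal{U}\mathfrak{g}^e$ is $\mathbb{N}$-graded with $\mathfrak{g}^e$ in strictly positive degrees, and both $V_\mu$ and $V_\lambda$ acquire $\mathbb{Z}$-gradings. Choosing $M$ larger than the maximal $h$-eigenvalue on either $V_\mu$ or $V_\lambda$, the representation maps factor through $\mathcal{U}^{\leq M}\mathfrak{g}^e$, which is finite-dimensional, so all the dualizations below are legitimate.

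Consider the two composites
\[
\psi_\mu\colon \mathcal{U}^{\leq M}\mathfrak{g}^e \longrightarrow V_\mu^*\otimes V_\mu \longrightarrow v_\mu^*\otimes V_\mu,
\qquad
\psi_\lambda\colon \mathcal{U}^{\leq M}\mathfrak{g}^e \longrightarrow V_\lambda^*\otimes V_\lambda \longrightarrow v_\lambda^*\otimes V_\lambda,
\]
where the second arrow in each case projects onto the $v^*$-component; explicitly $\psi_\mu(x)=\sum_i v_\mu^*(x\cdot v_i)\,v_\mu^*\otimes v_i$ and similarly for $\psi_\lambda$. As in Proposition \ref{toprow}, $\ker\psi_\mu=\mathrm{Ann}_{\mathcal{U}\mathfrak{g}^e}(v_\mu^*)$ and $\ker\psi_\lambda=\mathrm{Ann}_{\mathcal{U}\mathfrak{g}^e}(v_\lambda^*)$, intersected with $\mathcal{U}^{\leq M}\mathfrak{g}^e$. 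By Lemma \ref{ginzburgnew} we have $\ker\psi_\lambda\subseteq\ker\psi_\mu$, so $\psi_\mu$ factors through $\psi_\lambda$: there is a linear map $\theta\colon \mathrm{Im}(\psi_\lambda)\to \mathrm{Im}(\psi_\mu)$ with $\theta\circ\psi_\lambda=\psi_\mu$. Dualizing, the inclusion of kernels becomes a surjection of cokernels, which translates into the statement that every functional on $\mathcal{U}^{\leq M}\mathfrak{g}^e$ of the form $x\mapsto v_\mu^*(x\cdot w)$ (for $w\in V_\mu$) coincides with one of the form $x\mapsto v_\lambda^*(x\cdot z)$ for some $z\in V_\lambda$. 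Concretely: the functionals $\{x\mapsto v_\mu^*(x\cdot w): w\in V_\mu\}$ span exactly $(\mathcal{U}^{\leq M}\mathfrak{g}^e/\mathrm{Ann}(v_\mu^*))^*$, those with $v_\lambda^*$ span $(\mathcal{U}^{\leq M}\mathfrak{g}^e/\mathrm{Ann}(v_\lambda^*))^*$, and the inclusion of annihilators gives the surjection of the latter onto the former, so every $w$-functional is realized by some $z$.

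Finally I pass from $\mathcal{U}\mathfrak{g}^e$ to the group $G^e$ exactly as in equation \eqref{xtog}: since $G^e$ is unipotent, every $g\in G^e$ is $\expn(x)$ for some $x\in\mathfrak{g}^e$, and expanding the exponential expresses $v_\mu^*(g\cdot w)$ and $v_\lambda^*(g\cdot z)$ as the same convergent (in fact finite) sum of the $\mathcal{U}\mathfrak{g}^e$-functionals, which agree term by term. Hence $v_\mu^*(g\cdot w)=v_\lambda^*(g\cdot z)$ for all $g\in G^e$, as desired.

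The only genuine input beyond bookkeeping is Lemma \ref{ginzburgnew}, which is already established; the main thing to be careful about is that the grading argument forces the representation maps to factor through a common finite-dimensional quotient, so that the kernel-inclusion/cokernel-surjection duality is valid and the resulting element $z$ genuinely lies in $V_\lambda$ rather than in some completion. I expect no serious obstacle — this is essentially the proof of Proposition \ref{toprow} with $v_\lambda^*$ playing the role previously played by $v_\mu^*$, and with the inclusion of annihilators running in the direction supplied by Lemma \ref{ginzburgnew}.
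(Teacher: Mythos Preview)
Your proposal is correct and follows essentially the same approach as the paper's proof: both restrict the representation maps to a finite-dimensional piece $\mathcal{U}^{\leq M}\mathfrak{g}^e$, identify the kernels of the projected maps with the annihilators of $v_\mu^*$ and $v_\lambda^*$, invoke Lemma \ref{ginzburgnew} to obtain the kernel inclusion $\ker\psi_\lambda\subseteq\ker\psi_\mu$, dualize to conclude that every $v_\mu^*$-functional is a $v_\lambda^*$-functional, and then pass to $G^e$ via the exponential exactly as in \eqref{xtog}. The only cosmetic difference is that you phrase the dual step in terms of cokernels and a factorization $\theta$, whereas the paper speaks directly of image containment $\mathrm{Im}(\varphi_\mu^*)\subseteq\mathrm{Im}(\varphi_\lambda^*)$; these are the same statement.
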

\begin{proof}
As in the proof of Proposition \ref{toprow}, we will show this first for linear functions on the universal enveloping algebra. Consider the following representation maps:
\begin{align*}
&\varphi_\mu:\mathcal{U}\mathfrak{g}^e\longrightarrow v_\mu\otimes V_\mu^*\\
&\varphi_\lambda:\mathcal{U}\mathfrak{g}^e\longrightarrow v_\lambda\otimes V_\lambda^*
\end{align*}
As in the previous proof, we can restrict to considering 
\begin{align*}
&\varphi_\mu:\mathcal{U}^{\leq M}\mathfrak{g}^e\longrightarrow v_\mu\otimes V_\mu^*\\
&\varphi_\lambda:\mathcal{U}^{\leq M}\mathfrak{g}^e\longrightarrow v_\lambda\otimes V_\lambda^*
\end{align*}
for some sufficiently large integer $M$. The dual maps realize the elements of $v_\mu^*\otimes V_\mu$ and of $v_\lambda^*\otimes V_\lambda$ as functions on the universal enveloping algebra
\begin{equation}
\label{firstprime}
\begin{tikzcd}
  v_\mu^*\otimes V_\mu\arrow{r}{\varphi_\mu^*}&(\mathcal{U}^{\leq M}\mathfrak{g}^e)^*\\
 &v_\lambda^*\otimes V_\lambda\arrow{u}{\varphi_\lambda^*},
\end{tikzcd}
\end{equation} 
and we would like to show that every function in $v_\mu^*\otimes V_\mu$, when restricted to $\mathcal{U}\mathfrak{g}^e$, is equivalent to a function in $v_\lambda^*\otimes V_\lambda$.

Therefore, we will prove that the image of $\varphi_\mu^*$ is contained in the image of $\varphi_\lambda^*$. Dualizing diagram \eqref{firstprime},
\begin{equation}
\label{secondprime}
\begin{tikzcd}
  v_\mu\otimes V_\mu^*&\mathcal{U}^{\leq M}\mathfrak{g}^e\arrow{l}{\varphi_\mu}\arrow{d}{\varphi_\lambda}\\
 &v_\lambda\otimes V_\lambda^*
\end{tikzcd}
\end{equation}
it is equivalent to show that 
\[\text{ker}(\varphi_\lambda)\subseteq\text{ker}(\varphi_\mu).\]

Suppose $x\in\text{ker}(\varphi_\lambda)$. Then
\[\varphi_\lambda(x)=\sum_{i}v_\lambda^*(x\cdot v_i)v_\lambda\otimes v_i^*=0\]
and so $v_\lambda^*(x\cdot v_i)=0$ for each $v_i$. But then $x\cdot v_\lambda^*=0$, and so by Lemma \ref{ginzburgnew} we also have $x\cdot v_\mu^*=0$, and therefore $x\in\text{ker}(\varphi_\mu)$.

So $\text{ker}(\varphi_\lambda)\subseteq\text{ker}(\varphi_\mu)$, and therefore $\text{Im}(\varphi_\lambda^*)\supseteq\text{Im}(\varphi_\mu^*)$. For any $v_\mu^*\otimes w\in v_\mu^*\otimes V_\mu$, there is an element $z\in V_\lambda$ such that for any $x\in\mathcal{U}^{\leq M}\mathfrak{g^e}$,
\[v_\mu^*(x\cdot w)=v_\lambda^*(x\cdot z).\]
As in the proof of Proposition \ref{toprow}, it follows that
\[v_\mu^*(g\cdot w)=v_\lambda^*(g\cdot z)\qquad\text{for any $g\in G^e$.}\qedhere\]
\end{proof}

\begin{rk}
\label{4.11}
Proposition \ref{telescope} implies that the ideal $\mathcal{I}_{\cent}$ also contains all elements of the form
\[(f^{n\lambda}_{v_\lambda^*,z}-f^\mu_{v_\mu^*,w})t^{n\lambda}\in\left(\bigoplus_{\mu\leq n\lambda}V^*_\mu\otimes V_\mu\right)t^{n\lambda}\]
for $w$ and $z$ as above. We are now ready to prove Theorem \ref{mainthm}.
\end{rk}

\begin{proof}[Proof of Theorem \ref{mainthm}]
First note that by \eqref{topmult}, the function $\Phi'$ defined in \eqref{phi'}  is a homomorphism of graded algebras. We have $R_\lambda[\pet]=R_\lambda[\mathcal{B}]/\mathcal{I}_{\pet}$, where
\begin{align*}
\mathcal{I}_{\pet}&=\bigoplus_{n\geq0}\left\{u\in V_{n\lambda}\mid v_{n\lambda}^*(g\cdot u)=0,\, \forall gB^-\in\pet\right\}\\
	&=\bigoplus_{n\geq0}\left\{u\in V_{n\lambda}\mid v_{n\lambda}^*(g\cdot u)=0,\, \forall g\in G^e\right\},
\end{align*}
since the image of $G^e$ is dense in $\pet$.
Similarly $R_\lambda[\cent]=R_\lambda[\overline{G}]/\mathcal{I}_{\cent},$ where
\begin{align*}
\mathcal{I}_{\cent}&=\bigoplus_{n\geq0}\left\{\sum f^\mu_{v^*,u}t^{n\lambda}\in \bigoplus_{\mu\leq n\lambda} V_{\mu}^*\otimes V_{\mu}\mid \sum f^\mu_{v^*,u}(g)\lambda(t)^n=0,\,\forall (g,t)\in G^e\times T\right\}\\
	&=\bigoplus_{n\geq0}\left\{\sum f^\mu_{v^*,u}t^{n\lambda}\in \bigoplus_{\mu\leq n\lambda} V_{\mu}^*\otimes V_{\mu}\mid \sum f^\mu_{v^*,u}(g)=0,\,\forall g\in G^e\right\},
\end{align*}
since the function $t^{n\lambda}=\lambda(t)^n$ is always nonzero.

We will check everything on graded components. First, $\Phi'$ does indeed descend to a homomorphism of algebras 
\[\Phi: R_\lambda[\pet]\longrightarrow R_\lambda[\cent],\]
since for any $u\in\mathcal{I}_{\pet}\cap V_{n\lambda}$ and $(g,t)\in G^e\times T$
\begin{align*}
\Phi'(u)(g,t)&=f^{n\lambda}_{v_{n\lambda}^*,u}(g)\lambda(t)^n=v_{n\lambda}^*(g\cdot u)\lambda(t)^n=0,
	\end{align*}
and so $\Phi'(u)\in\mathcal{I}_{\cent}$.

Second, the homomorphism $\Phi$ is injective: if $\Phi'(u)\in\mathcal{I}_{\overline{G^e}}$ for some $u\in V_{n\lambda}$, then
\[v_{n\lambda}^*(g\cdot u)\lambda(t)^n=0\]
for all $(g,t)\in G^e\times T$, so $u\in\mathcal{I}_{\pet}$. Thus, $\text{ker}(\Phi')=\mathcal{I}_{\pet}$, and $\text{ker}(\Phi)=0$.

Last, $\Phi$ is surjective: suppose $f^\mu_{v^*,u}t^{n\lambda}\in (V_\mu^*\otimes V_\mu)t^{n\lambda}$.  By Proposition \ref{toprow}, there is a $w\in V_\mu$ such that 
\[f^\mu_{v^*,u}t^{n\lambda}\equiv f^\mu_{v_\mu^*,w}t^{n\lambda} \text{ (mod }\mathcal{I}_{\cent}),\]
as noted in Remark \ref{4.5}.
By Proposition \ref{telescope} there is a $z\in V_{n\lambda}$ such that
\[f^\mu_{v_\mu^*,w}t^{n\lambda}\equiv f^{n\lambda}_{v_\lambda^*,z}t^{n\lambda} \text{ (mod }\mathcal{I}_{\cent}),\]
as in Remark \ref{4.11}.
Then
\[\Phi(z)\equiv f^\mu_{v^*,u}t^{n\lambda}\text{ (mod }\mathcal{I}_{\cent}).\qedhere\]
\end{proof}

\section{The General Case}
\label{centralizers}
Now let $x\in\mathfrak{g}$ be a regular element, not necessarily nilpotent, and let $G^x\subset G$ be its centralizer. By the Jordan decomposition and by conjugating appropriately,
\[x=s+n\]
for some semisimple $s\in \mathfrak{t}$ and a nilpotent $n\in \mathfrak{n}$ such that 
\[n=\sum_{i\in I}e_i\]
is a sum of the simple root vectors indexed by the set $I\subset\{1,\ldots,l\}$.

The centralizer of $s$ in the group $G$ is the centralizer of the one-parameter subgroup \mbox{$\{\text{exp}(ts)\mid t\in\mathbb{C}^*\}$} and is therefore a Levi subgroup $L\subset G$ (see \cite{dig.mic:91}, Proposition 1.22). The centralizer $G^x=L^n$, being abelian, decomposes as
\[G^x= C\times A,\]
where $C$ is the center of $L$ and $A=[L,L]^n\cap N$ is the unipotent part of the centralizer of $n$ in the derived subgroup $[L,L]$. The element $n$ is a principal nilpotent of $[L,L]$, and $A$ is a unipotent subgroup that centralizes it, so all the results from Section \ref{main} apply to $A$ as a subgroup of the semisimple group $[L,L]$. (See Remark \ref{choices}.)

For any dominant weight $\lambda$ of $G$, the irreducible representation $V_\lambda$ decomposes into irreducible representations of $L$
\[V_\lambda\simeq\bigoplus_{(\alpha,\rho)\in[\lambda]}W^\alpha_\rho,\]
where $W^\alpha_\rho$ is the irreducible representation of $[L,L]$ of highest weight $\rho$ with an action of $C$ by the character $\alpha$, and $[\lambda]$ denotes the set of pairs $(\alpha, \rho)$ that appear in the decomposition of $V_\lambda$. Let $w_\rho^\alpha$ denote the highest weight vector of $W^\alpha_\rho$.

As before, fix a regular dominant weight $\lambda$ in the root lattice of $G$, and let $V_\lambda^*$ be the dual of the corresponding representation. There is a decomposition
\[V_\lambda^*\simeq\bigoplus_{(\alpha,\rho)\in[\lambda]}W^{\alpha*}_\rho,\]
and we denote the lowest weight vector of $W^{\alpha*}_\rho$ by $w^{\alpha*}_\rho$.

The dominant weight $\lambda$ gives rise to the line bundle $\mathcal{L}_\lambda$ on $\mathcal{B}$, with space of global sections
\[\Gamma(\mathcal{B},\mathcal{L}_\lambda)=V_\lambda\]
as in Section \ref{peterson}.
\begin{definition}
An element $\mathfrak{b}\in\mathcal{B}$ is \emph{general} if for all $(\alpha,\rho)\in[\lambda]$,
\[w^\alpha_\rho(\mathfrak{b})\neq0,\]
where $w^\alpha_\rho\in V_\lambda$ is viewed as a global section of $\mathcal{L}_\lambda$. The $G^x$-orbit of such an element is a \emph{general orbit} of $G^x$.
\end{definition}

Generality is independent of the basepoint of a $G^x$-orbit, and it is an open and nonempty condition. Let $h\in G$ be such that the $h$-translate $h\cdot\mathfrak{b}^-$ is general. This is the case if and only if
\[(h\cdot v_\lambda^*)(w_\rho^\alpha)\neq0\]
for all $(\alpha,\rho)\in[\lambda]$, and then $h\cdot v_\lambda^*$ satisfies the condition of Lemma \ref{annihilators} and Proposition \ref{bijection}.

Let $\px$ be the closure of the (general) $G^x$-orbit of $h\cdot\mathfrak{b}^-$ in $\mathcal{B}$, and let $\overline{G^x}$ be the closure of $G^x$ in the wonderful compactification $\overline{G}$. We will use the methods of Section \ref{main} to show that the varieties $\px$ and $\overline{G^x}$ are isomorphic. 

Consider the homogeneous coordinate rings of the flag variety and of the wonderful compactification given by the projective embeddings corresponding to $\lambda$. As before, we have
\begin{align*}
&R_\lambda[\px]=R_\lambda[\mathcal{B}]/\mathcal{I}_{\px}\\
&R_\lambda[\overline{G^x}]=R_\lambda[\overline{G}]/\mathcal{I}_{\overline{G^x}}
\end{align*}
where $\mathcal{I}_{\px}$ and $\mathcal{I}_{\overline{G^x}}$ are the ideals of global sections that vanish on $\px$ and $\overline{G^x}$ respectively. 

Define, component-wise, a map 
\begin{align}
\label{psi'}
\Psi': \,R_\lambda[\mathcal{B}]&\longrightarrow R_\lambda[\overline{G}]\\
	u&\longmapsto (h\cdot v_{n\lambda}^*\otimes u)t^{n\lambda}\nonumber
	\end{align}
for $u\in V_{n\lambda}$. We will show

\begin{thm}
\label{general}
The map $\Psi'$ descends to an isomorphism of graded algebras 
\[\Psi: R_\lambda[\px]\longrightarrow R_\lambda[\overline{G^x}].\]
\end{thm}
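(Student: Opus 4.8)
The plan is to mimic the proof of Theorem \ref{mainthm} almost verbatim, replacing the centralizer $G^e$ of the principal nilpotent by $G^x = C\times A$ and the lowest weight vector $v_\lambda^*$ by its translate $h\cdot v_\lambda^*$. First I would observe, exactly as in the principal nilpotent case, that by \eqref{topmult} the map $\Psi'$ is a homomorphism of graded algebras: the point is that $m\bigl((h\cdot v_\mu^*)\otimes u_1,\,(h\cdot v_\nu^*)\otimes u_2\bigr)$ equals $(h\cdot v_{\mu+\nu}^*)\otimes u$ because applying $h$ is $G$-equivariant and $v_\mu^*\otimes v_\nu^*\mapsto v_{\mu+\nu}^*$ under the dual multiplication map. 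Then I would spell out the two ideals: $\mathcal{I}_{\px}$ consists, in degree $n$, of the $u\in V_{n\lambda}$ with $(h\cdot v_{n\lambda}^*)(g\cdot u)=0$ for all $g\in G^x$ (using density of the $G^x$-orbit in $\px$), and $\mathcal{I}_{\overline{G^x}}$ consists, in degree $n$, of the $\sum f^\mu_{v^*,u}t^{n\lambda}\in\bigoplus_{\mu\le n\lambda}V_\mu^*\otimes V_\mu$ with $\sum f^\mu_{v^*,u}(g)=0$ for all $g\in G^x$, since $t^{n\lambda}=\lambda(t)^n$ never vanishes. Well-definedness of $\Psi$ and injectivity of $\Psi$ are then immediate from these descriptions, with $\ker\Psi'=\mathcal{I}_{\px}$, exactly as in the proof of Theorem \ref{mainthm}.

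The substantive part is surjectivity, and here the key is that $G^x$ is, up to the central factor $C$, a unipotent subgroup centralizing a principal nilpotent $n$ of the semisimple group $[L,L]$, so the machinery of Section \ref{main} applies to $A\subset[L,L]$. Concretely, I would argue that every matrix coefficient $f^\mu_{v^*,u}t^{n\lambda}$ is congruent mod $\mathcal{I}_{\overline{G^x}}$ to one of the form $f^{n\lambda}_{h\cdot v_\lambda^*,\,z}t^{n\lambda}=\Psi'(z)$. The torus $C$ acts on $V_\mu$ through a sum of characters, so a matrix coefficient of $G^x=C\times A$ splits according to the $C$-weight decomposition $V_\mu=\bigoplus_{(\alpha,\rho)}W^\alpha_\rho$; on each isotypic piece the $C$-action is scalar and the remaining dependence is on $A$ acting on the $[L,L]$-representation $W^\alpha_\rho$ with highest weight vector $w^\alpha_\rho$ and lowest weight vector $w^{\alpha*}_\rho$ of the dual. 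Applying Proposition \ref{toprow} (for the group $[L,L]$ and its principal nilpotent $n$) to each piece $W^{\alpha*}_\rho\otimes W^\alpha_\rho$, one replaces an arbitrary $v^*$ by $w^{\alpha*}_\rho$; and since $h\cdot v_\lambda^*$ was chosen so that $(h\cdot v_\lambda^*)(w_\rho^\alpha)\neq0$, its component in each $W^{\alpha*}_\rho$ is a vector $v^*$ satisfying the hypothesis of Lemma \ref{annihilators}, so Proposition \ref{bijection} lets us pass from $w^{\alpha*}_\rho$ back to the component of $h\cdot v_\lambda^*$ — i.e. a matrix coefficient of $h\cdot v_\lambda^*$. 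Finally Proposition \ref{telescope}, again applied within $[L,L]$, telescopes any $\mu\le n\lambda$ up to the top piece $n\lambda$, producing the desired $z\in V_{n\lambda}$ with $f^\mu_{v^*,u}t^{n\lambda}\equiv \Psi'(z)\pmod{\mathcal{I}_{\overline{G^x}}}$.

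I expect the main obstacle to be the bookkeeping needed to interleave the central torus $C$ with the unipotent factor $A$ correctly. The results of Section \ref{main} are phrased for a semisimple group and its principal nilpotent centralizer, whereas here $G^x$ is only \emph{reductive}-like (a product of a torus and a unipotent group), and the relevant representations $W^\alpha_\rho$ are representations of $[L,L]$, not of $G$. One must be careful that the $C$-character $\alpha$ is literally constant on each summand and therefore factors out of every matrix coefficient, so that the vanishing conditions defining $\mathcal{I}_{\px}$ and $\mathcal{I}_{\overline{G^x}}$ genuinely reduce, summand by summand, to conditions about $A$ acting on $W^\alpha_\rho$; and that the compatible choices of lowest weight vectors made before \eqref{topmult} are matched, under the decomposition $V_\lambda^*=\bigoplus W^{\alpha*}_\rho$, with analogous compatible choices of the $w^{\alpha*}_\rho$ inside $[L,L]$. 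Once that translation is made cleanly, the three propositions of Section \ref{main} plug in directly and the proof of surjectivity — hence of Theorem \ref{general} — runs along the same three-step chain ($f^\mu_{v^*,u}\rightsquigarrow f^\mu_{w^{\alpha*}_\rho,\,\cdot}\rightsquigarrow f^\mu_{(h\cdot v_\lambda^*)\text{-component},\,\cdot}\rightsquigarrow f^{n\lambda}_{h\cdot v_\lambda^*,z}$) as before. I would close by remarking that the identification $\overline{G^x}\cong\px$ then follows because the coordinate rings of the two projective embeddings are isomorphic as graded rings.
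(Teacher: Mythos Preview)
Your overall plan is right and matches the paper's: show $\Psi'$ is a graded algebra map via \eqref{topmult}, identify the two ideals, and then reduce surjectivity to the three Section~\ref{main} propositions applied inside $[L,L]$. The well-definedness and injectivity steps are fine.

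There is, however, a real gap in your surjectivity chain. You apply Proposition~\ref{bijection} \emph{before} the telescoping step, using the generality condition $(h\cdot v_\lambda^*)(w^\alpha_\rho)\neq0$. But that condition is only assumed for pairs $(\alpha,\rho)\in[\lambda]$, whereas at that point of your argument you are sitting in a summand $W^\alpha_\sigma$ of $V_\mu$, i.e.\ $(\alpha,\sigma)\in[\mu]$. These index sets are not the same: in general $(\alpha,\sigma)\in[\mu]$ does \emph{not} imply $(\alpha,\sigma)\in[\lambda]$ (take $L=G$, $\mu=0$, $\lambda$ regular). So ``the component of $h\cdot v_\lambda^*$ in $W^{\alpha*}_\rho$'' that you want to invoke simply does not exist, and Proposition~\ref{bijection} does not apply. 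Relatedly, your final ``telescope'' step, as written, would have to move a function whose functional is already the (non--lowest-weight) component of $h\cdot v_\lambda^*$; but Proposition~\ref{telescope} only compares \emph{lowest} weight functionals $w^{\alpha*}_\sigma$ and $w^{\alpha*}_\rho$.

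The fix is exactly what the paper does: reverse the order and insert the missing bridge. First use Proposition~\ref{toprow} (Proposition~\ref{toprowgeneral}) to reach $w^{\alpha*}_\sigma$. Then invoke Lemma~\ref{leviirreps}, which you do not mention but is the crux: for $(\alpha,\sigma)\in[\mu]$ with $\mu\le\lambda$ it produces $(\alpha,\rho)\in[\lambda]$ with $\sigma\le_L\rho$. Now Proposition~\ref{telescope} (Proposition~\ref{telescopegeneral}) moves you to $w^{\alpha*}_\rho$ with $(\alpha,\rho)\in[\lambda]$, and only \emph{then} does the generality hypothesis on $h$ let Proposition~\ref{bijection} replace $w^{\alpha*}_\rho$ by $\pi^\alpha_\rho(h\cdot v_\lambda^*)$, yielding $\Psi'(z)$. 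A minor further point: the paper carries this out only in degree $1$ and then appeals to generation in degree $1$; your degree-$n$ formulation would additionally require the generality condition for $n\lambda$, which you have not checked.
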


\begin{prop}
\label{toprowgeneral}
Let $w^*\otimes u\in W^{\alpha*}_\rho\otimes W^{\alpha}_\rho$. There exists an element $v\in W^\alpha_\rho$ such that for all $g\in G^x$
\[w^*(g\cdot u)=w^{\alpha*}_\rho(g\cdot v).\]
\end{prop}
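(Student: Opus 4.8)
The plan is to mirror the proof of Proposition \ref{toprow}, but carried out relative to the Levi subgroup $L$ rather than all of $G$, exploiting the decomposition $G^x = C\times A$. The key observation is that $W^\alpha_\rho$ is an irreducible representation of $L$, the factor $C$ acts on it by a single character $\alpha$, and $A$ is a unipotent subgroup of $[L,L]$ centralizing the principal nilpotent $n$ of $[L,L]$, so the machinery of Section \ref{main} applies verbatim to the $[L,L]$-action on $W^\alpha_\rho$ once we strip off the central character.

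First I would reduce to the action of $A$. For any $g\in G^x$, write $g = g_C g_A$ with $g_C\in C$ and $g_A\in A$. Since $C$ acts on all of $W^\alpha_\rho$ (and hence also on $W^{\alpha*}_\rho$) through the character $\alpha$, we have $w^*(g\cdot u) = \alpha(g_C)\,\alpha^{-1}(g_C)\cdot$(something)—more precisely, $g_C$ acts on $u\in W^\alpha_\rho$ by the scalar $\alpha(g_C)$ and on $w^*\in W^{\alpha*}_\rho$ by $\alpha(g_C)^{-1}$, so $w^*(g\cdot u) = w^*(g_A\cdot u)$, and similarly $w^{\alpha*}_\rho(g\cdot v) = w^{\alpha*}_\rho(g_A\cdot v)$. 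Thus it suffices to find $v\in W^\alpha_\rho$ with $w^*(g_A\cdot u) = w^{\alpha*}_\rho(g_A\cdot v)$ for all $g_A\in A$.

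Next I would apply Proposition \ref{toprow} directly, with the semisimple group $[L,L]$ in place of $G$, the principal nilpotent $n$ in place of $e$, the unipotent centralizer $A$ in place of $G^e$, the irreducible $[L,L]$-representation $W^\alpha_\rho$ in place of $V_\mu$, and the lowest weight vector $w^{\alpha*}_\rho\in W^{\alpha*}_\rho$ in place of $v_\mu^*$. By Remark \ref{choices}, all the lemmas and propositions of Section \ref{main}—in particular Lemma \ref{ann} and Proposition \ref{toprow}—apply in this setting, since they use only that $[L,L]$ is semisimple, that $\rho$ is a weight of $[L,L]$, and that $A$ is a unipotent subgroup centralizing the principal nilpotent. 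This yields an element $v\in W^\alpha_\rho$ such that $w^*(g_A\cdot u) = w^{\alpha*}_\rho(g_A\cdot v)$ for all $g_A\in A$, which by the reduction above gives $w^*(g\cdot u) = w^{\alpha*}_\rho(g\cdot v)$ for all $g\in G^x$.

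The main point requiring care—though it is not really an obstacle—is bookkeeping the central character: one must check that $w^{\alpha*}_\rho$ pairs with $w^\alpha_\rho$ in a way consistent with the character $\alpha$ on both $W^\alpha_\rho$ and its dual, so that the $C$-action genuinely cancels rather than merely contributing a nonzero scalar. Since $C$ acts on $W^\alpha_\rho$ by $\alpha$ and on $W^{\alpha*}_\rho$ by $\alpha^{-1}$ by definition of the dual, this cancellation is automatic, and the only substantive input is the already-established Proposition \ref{toprow} applied to $[L,L]$.
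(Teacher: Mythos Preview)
Your approach is exactly the paper's: apply Proposition~\ref{toprow} to the semisimple group $[L,L]$ acting on $W^\alpha_\rho$ to handle the factor $A$, and handle $C$ via the character $\alpha$.

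There is one computational slip in your reduction step. In the expression $w^*(g\cdot u)$, the element $g$ acts only on $u$; nothing is acting on $w^*$. So the fact that $C$ acts on $W^{\alpha*}_\rho$ by $\alpha^{-1}$ is irrelevant here, and there is no cancellation: one has
\[
w^*(g\cdot u)=w^*(g_C g_A\cdot u)=\alpha(g_C)\,w^*(g_A\cdot u),
\]
not $w^*(g_A\cdot u)$. This does not damage the argument, because the \emph{same} scalar appears on the other side:
\[
w^{\alpha*}_\rho(g\cdot v)=\alpha(g_C)\,w^{\alpha*}_\rho(g_A\cdot v),
\]
so the equality $w^*(g_A\cdot u)=w^{\alpha*}_\rho(g_A\cdot v)$ supplied by Proposition~\ref{toprow} still yields $w^*(g\cdot u)=w^{\alpha*}_\rho(g\cdot v)$. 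This is precisely the computation the paper carries out; just correct the sentence claiming the $C$-contribution cancels to zero---it does not cancel, it matches.
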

\begin{proof}[Proof of the Proposition]
We decompose the centralizer $G^x$ as 
\[G^x\simeq C\times A.\]
When we restrict $W^\alpha_\rho$ to $[L,L]\subset L$ the representation remains irreducible, and by Proposition \ref{toprow} there is a $v\in W^\alpha_\rho$ such that for any $a\in A$
\[w^*(a\cdot u)=w^{\alpha*}_\rho(a\cdot v).\]
We can write any $g\in G^x$ as $g=ca$ with $c\in C$ and $a\in A$, and since $C$ acts on $W^\alpha_\rho$ by $\alpha$ we have
\begin{align*}
w^*(g\cdot u)&=w^*(ca\cdot u)\\
			&=\alpha(c)w^*(a\cdot u)\\
			&=\alpha(c)w^{\alpha*}_\rho(a\cdot v)\\
			&=w^{\alpha*}_\rho(ca\cdot v)=w^{\alpha*}_\rho(g\cdot v).\qedhere
\end{align*}
\end{proof}

Proposition \ref{toprowgeneral} is an analogue to Proposition \ref{toprow}. Proposition \ref{telescopegeneral} will give an analogous result to Proposition \ref{telescope}, and the following lemma will allow us to apply it to the proof of Theorem \ref{general}. 

We introduce an new item of notation: If two integral weights $\theta$ and $\xi$ of $T$ differ by a linear combination of simple roots of $[L,L]$ with positive integral coefficients, we will write $\theta\leq_L\xi$ to indicate that $\theta$ is less than $\xi$ in the partial ordering on the weight lattice of $[L,L]$.

\begin{lem}
\label{leviirreps}
Suppose $\mu$ and $\lambda$ are dominant weights of $G$ such that $\mu\leq\lambda$. Then for any $(\alpha,\sigma)\in[\mu]$ there exists a dominant weight $\rho$ of $[L,L]$ such that $\sigma\leq_L\rho$ and $(\alpha,\rho)\in[\lambda]$.
\end{lem}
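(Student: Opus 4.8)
The plan is to reduce the statement about $G$-weights to a statement about $[L,L]$-weights by analyzing which $L$-constituents appear in $V_\lambda$ and $V_\mu$. Since $\mu \leq \lambda$ as weights of $G$, we may write $\lambda - \mu = \sum_{j=1}^l c_j \alpha_j$ with all $c_j \in \mathbb{Z}_{\geq 0}$. The partition $\{1,\ldots,l\} = I \sqcup I^c$ (where $I$ indexes the simple roots of $[L,L]$) splits this sum into a ``$[L,L]$-part'' $\sum_{j \in I} c_j \alpha_j$ and a ``central part'' $\sum_{j \notin I} c_j \alpha_j$; the latter is trivial on the derived group and records the shift in the $C$-character. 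So the first step is to fix $(\alpha, \sigma) \in [\mu]$ and produce a candidate $(\alpha, \rho) \in [\lambda]$ with the same central character $\alpha$ and $\sigma \leq_L \rho$.

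First I would pick a weight vector realizing the pair $(\alpha,\sigma)$: let $w^\alpha_\sigma \in V_\mu$ be the highest weight vector of the summand $W^\alpha_\sigma$, which has $T$-weight $\sigma$. The idea is to lift $\sigma$ back to a weight of $V_\lambda$. Since $V_\mu$ is a quotient of $V_\lambda$ ``in the PRV / tensor sense'' — more precisely, because $\mu \leq \lambda$ there is a nonzero $[L,L]\times C$-equivariant map, or one can argue directly that the weight $\sigma$ (which is a weight of $V_\mu$, hence $\leq \mu \leq \lambda$, hence a weight of $V_\lambda$) occurs in $V_\lambda$ — we know $\sigma$ itself is a weight of $V_\lambda$. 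Let $\rho$ be a dominant weight of $[L,L]$ that is maximal (in the $\leq_L$ order) among highest weights of $[L,L]$-constituents of $V_\lambda$ that lie in the $\alpha$-eigenspace for $C$ and satisfy $\sigma \leq_L \rho$; one must check this set is nonempty. Nonemptiness follows because $\sigma$ occurs as a $T$-weight of $V_\lambda$ in an $L$-isotypic piece whose central character must be exactly $\alpha$ (the central character is determined by the coset of the weight modulo the root lattice of $[L,L]$, and $\sigma$ has central character $\alpha$), and any $T$-weight $\sigma$ of an $[L,L]$-module is $\leq_L$ the highest weight of the constituent containing it. This produces the required $(\alpha,\rho) \in [\lambda]$ with $\sigma \leq_L \rho$.

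The one genuinely delicate point — and the step I expect to be the main obstacle — is verifying that the central character attached to $\sigma$ inside $V_\lambda$ agrees with the $\alpha$ attached to it inside $V_\mu$, i.e. that passing from $V_\mu$ to $V_\lambda$ does not shift the $C$-action. This is where the hypothesis $\mu \leq \lambda$ (difference a \emph{non-negative} combination of \emph{simple roots of $G$}) is used: the $C$-character of an $L$-constituent of highest weight $\tau$ is the restriction of $\tau$ to $C = Z(L)^\circ$, equivalently the image of $\tau$ in the quotient of the weight lattice by the root lattice of $[L,L]$; and since $\lambda - \mu$ and $\rho - \sigma$ are both combinations of simple roots of $G$, one checks the central characters match by comparing these images modulo the $[L,L]$-root lattice. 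Once this bookkeeping lemma about restriction to the central torus is in hand, the rest is the routine weight-combinatorics sketched above, and I would present it by choosing the dominant $[L,L]$-weight $\rho$ in the appropriate coset so that $\rho \geq_L \sigma$ and $(\alpha,\rho)$ indeed appears in the $L$-decomposition of $V_\lambda$.
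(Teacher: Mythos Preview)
Your proposal is correct and follows the same route as the paper: the $T$-weight of $w^\alpha_\sigma$ lies in $\text{Spec}(\mu)\subset\text{Spec}(\lambda)$, hence in some $L$-constituent $W^\beta_\rho\subset V_\lambda$, which forces $\beta=\alpha$ and $\sigma\leq_L\rho$. The step you flag as ``genuinely delicate'' is in fact a one-liner in the paper --- since $C$ acts on any weight vector by the restriction of its $T$-weight to $C$, the central character is intrinsic to the weight itself and does not depend on which representation it sits in, so no extra bookkeeping is needed.
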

\begin{proof}
Let $\text{Spec}(\mu)$ and $\text{Spec}(\lambda)$ denote the set of all weights of $G$ that appear in the irreducible representations $V_\mu$ and $V_\lambda$ respectively. Since $\mu\leq\lambda$, $\text{Spec}(\mu)\subset\text{Spec}(\lambda)$. (See \cite{fultonharris}, Section 14.1.)

If $(\alpha,\sigma)\in[\mu]$, then $\alpha+\sigma\in\text{Spec}(\mu)\subset\text{Spec}(\lambda)$, so there is some $(\beta,\rho)\in[\lambda]$ such that $\alpha+\sigma$ appears as a weight in $W^\beta_\rho$. 

Since the center $C$ acts by the same character on all of $W^\beta_\rho$, we must have $\beta=\alpha$. When we restrict the representation $W^\alpha_\rho$ to the derived subgroup $[L,L]$, it is the irreducible representation of $[L,L]$ of highest weight $\rho$. Since $\sigma$ appears as a weight in this representation, $\sigma\leq_L\rho$.
\end{proof}

\begin{prop}
\label{telescopegeneral}
Let $\sigma$ and $\rho$ be dominant weights of $[L,L]$ such that $\sigma\leq_L\rho$, and let $\alpha$ be a character of $C$. Let $v\in W^\alpha_\sigma.$ Then there exists an element $z\in W^\alpha_\rho$ such that for all $g\in G^x$,
\[w^{\alpha*}_\sigma(g\cdot v)=w^{\alpha*}_\rho(g\cdot z).\]
\end{prop}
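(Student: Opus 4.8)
The plan is to mimic the proof of Proposition \ref{telescope}, but carried out inside the derived subgroup $[L,L]$ rather than $G$, and then to promote the resulting identity from $A$ to all of $G^x$ by the same character-twisting trick used in Proposition \ref{toprowgeneral}. The key observation is that $n$ is a principal nilpotent of $[L,L]$, that $A$ is a unipotent subgroup of $[L,L]$ centralizing $n$, and that $W^\alpha_\sigma$ and $W^\alpha_\rho$, restricted to $[L,L]$, are the irreducible representations of highest weights $\sigma$ and $\rho$; so by Remark \ref{choices} every result of Section \ref{main} applies verbatim with $G$ replaced by $[L,L]$ and $\mathfrak{g}^e$ replaced by $\text{Lie}(A)$.

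First I would record that, since $\sigma \leq_L \rho$ as dominant weights of $[L,L]$, Proposition \ref{telescope} (applied to the semisimple group $[L,L]$, its principal nilpotent $n$, and the subgroup $A$) yields an element $z \in W^\alpha_\rho$ such that
\[
w^{\alpha*}_\sigma(a\cdot v) = w^{\alpha*}_\rho(a\cdot z) \qquad \text{for all } a\in A.
\]
Here I am using that $w^{\alpha*}_\sigma$ and $w^{\alpha*}_\rho$ are lowest weight vectors of the $[L,L]$-representations $W^{\alpha}_\sigma{}^*$ and $W^\alpha_\rho{}^*$, so they play exactly the role of the $v_\mu^*$ of Proposition \ref{telescope}; one only needs to note that the compatible choice of lowest weight vectors made before \eqref{topmult} restricts to a compatible choice for $[L,L]$, or simply observe that the statement of Proposition \ref{telescope} is insensitive to rescaling $w^{\alpha*}_\sigma$ and $w^{\alpha*}_\rho$.

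Second, I would extend this from $A$ to $G^x = C\times A$. Write an arbitrary $g\in G^x$ as $g = ca$ with $c\in C$ and $a\in A$. Both $W^\alpha_\sigma$ and $W^\alpha_\rho$ carry the $C$-action given by the single character $\alpha$, hence so do their duals (by $\alpha^{-1}$, uniformly on each space), and therefore
\begin{align*}
w^{\alpha*}_\sigma(g\cdot v) &= w^{\alpha*}_\sigma(ca\cdot v) = \alpha(c)\, w^{\alpha*}_\sigma(a\cdot v)\\
 &= \alpha(c)\, w^{\alpha*}_\rho(a\cdot z) = w^{\alpha*}_\rho(ca\cdot z) = w^{\alpha*}_\rho(g\cdot z),
\end{align*}
which is the desired identity. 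The argument here is word-for-word the one in Proposition \ref{toprowgeneral}.

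I do not anticipate a serious obstacle: the only point requiring a moment's care is the bookkeeping with lowest weight vectors — checking that the hypotheses of Proposition \ref{telescope} really are met for $[L,L]$ and $A$, in particular that $w^{\alpha*}_\sigma$ is a lowest weight vector for the $[L,L]$-action (not merely for $L$) and that $\sigma \leq_L \rho$ is the correct partial order for invoking Proposition \ref{telescope} inside $[L,L]$. Both are immediate once one unwinds the definitions. Everything else is a direct transcription of proofs already in the paper.
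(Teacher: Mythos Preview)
Your proposal is correct and follows essentially the same approach as the paper's own proof: apply Proposition \ref{telescope} inside $[L,L]$ (using that $n$ is principal nilpotent there and $A$ centralizes it) to obtain $z$ satisfying the identity for all $a\in A$, then extend to $G^x=C\times A$ by pulling out the character $\alpha(c)$ on both sides. The paper's proof is more terse but otherwise identical; your additional remarks about lowest weight vectors and the partial order $\leq_L$ are correct sanity checks that the paper leaves implicit.
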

\begin{proof}
Since $\sigma\leq_L\rho$, by Proposition \ref{telescope} there exists an element $z\in W^{\alpha*}_\rho$ such that for any $a\in A$,
\[w^{\alpha*}_\sigma(a\cdot v)=w^{\alpha*}_\rho(a\cdot z).\]
Then we can write any $g\in G^x$ as $g=ca$ with $c\in C$ and $a\in A$, and since $C$ acts by the character $\alpha$ on both $W^{\alpha*}_\sigma$ and $W^{\alpha*}_\rho$ we have
\begin{align*}
w^{\alpha*}_\sigma(g\cdot v)&=w^{\alpha*}_\sigma(ca\cdot v)\\
						&=\alpha(c)w^{\alpha*}_\sigma(a\cdot v)\\
						&=\alpha(c)w^{\alpha*}_\rho(a\cdot z)\\
						&=w^{\alpha*}_\rho(ca\cdot z)=w^{\alpha*}_\rho(g\cdot z).\qedhere
\end{align*}
\end{proof}

As in Section \ref{wond}, we will use the notation $f^{\alpha,\sigma}_{w^*,v}$ to denote a global section arising from an element $w^*\otimes v\in W^{\alpha*}_\sigma\otimes W^\alpha_\sigma.$

\begin{proof}[Proof of Theorem \ref{general}]
As before, by \eqref{topmult} the function $\Psi'$ is a homomorphism of graded algebras. We have $R_\lambda[\px]=R_\lambda[\mathcal{B}]/\mathcal{I}_{\px}$, where
\begin{align*}
\mathcal{I}_{\px}&=\bigoplus_{n\geq0}\left\{u\in V_{n\lambda}\mid v_{n\lambda}^*(h^{-1}g\cdot u)=0,\forall g^{-1}hB^-\in\px\right\}\\
&=\bigoplus_{n\geq0}\left(\bigoplus_{(\alpha,\rho)\in[n\lambda]}\left\{v\in W^{\alpha}_{\rho}\mid  (h\cdot v^*_{n\lambda})(g\cdot v)=0,\forall g\in G^x\right\}\right).
\end{align*}

Moreover, $R_\lambda[\overline{G^x}]=R_\lambda[\overline{G}]/\mathcal{I}_{\overline{G^x}}$, where
\begin{align*}
\begin{split}
\mathcal{I}_{\overline{G^x}}&=\bigoplus_{n\geq0}\left(\bigoplus_{\mu\leq n\lambda}\left\{\sum f^\mu_{v^*,u}\in V^*_\mu\otimes V_\mu\mid \sum f^\mu_{v^*,u}(g)\lambda(t)^n=0,\forall (g,t)\in G^x\times T\right\}\right)\\
&=\bigoplus_{n\geq0}\Bigg(\bigoplus_{\mu\leq n\lambda}\bigoplus_{(\alpha,\sigma)\in[\mu]}\left\{\sum f^{\alpha,\sigma}_{w^*,v}\in W^{\alpha*}_\sigma\otimes W^\alpha_\sigma\mid\right.\\
&\qquad\qquad\qquad\qquad\qquad\qquad\qquad\left. \sum f^{\alpha,\sigma}_{w^*,v}(g)\lambda(t)^n=0,\forall (g,t)\in G^x\times T\right\}\Bigg).
\end{split}
\end{align*}

We will check everything on graded components. The homomorphism $\Psi'$ does indeed descend to a homomorphism of algebras
\[\Psi:R_\lambda[\px]\longrightarrow R_\lambda[\overline{G^x}],\]
since for any $u\in\mathcal{I}_{\px}$ and $(g,t)\in G^x\times T$,
\[\Psi'(u)(g,t)=f^{n\lambda}_{h\cdot v_{n\lambda}^*, u}t^{n\lambda}(g,t)=v_{n\lambda}^*(h^{-1}g\cdot u)\lambda(t)^n=0.\]

Moreover, $\Psi$ is injective: if $\Psi'(u)=0$ for some $u\in V_{n\lambda}$, then
\[v_{n\lambda}^*(h^{-1}g\cdot u)\lambda(t)^n=0\]
for all $(g,t)\in G^x\times T$, so $u\in\mathcal{I}_{\px}$. So $\text{ker}(\Psi')=\mathcal{I}_{\px}$, and $\text{ker}(\Psi)=0$.

Lastly, $\Psi$ is surjective. We will prove this first in degree $1$---since the homogeneous coordinate ring is generated in degree $1$, surjectivity will then follow for all degrees.

Suppose $\mu\leq\lambda$, $(\alpha,\sigma)\in[\mu]$, and $f^{\alpha,\sigma}_{w^*,v}t^{\lambda}\in (W^{\alpha*}_\sigma\otimes W^\alpha_\sigma)t^{\lambda}$. By Proposition \ref{toprowgeneral}, there is a $u\in W^\alpha_\sigma$ such that
\[f^{\alpha,\sigma}_{w^*,v}t^{\lambda}\equiv f^{\alpha,\sigma}_{w^{\alpha*}_\sigma,u}t^{\lambda} \text{ (mod }\mathcal{I}_{\overline{G^x}}).\]
By Lemma \ref{leviirreps}, there is some dominant weight $\rho$ of $[L,L]$ such that $\sigma\leq_L\rho$ and $(\alpha,\rho)\in [\lambda]$, and by Proposition \ref{telescopegeneral} there is an element $y\in W^\alpha_\rho$ such that
\[f^{\alpha,\sigma}_{w^{\alpha*}_\sigma,u}t^{\lambda}\equiv f^{\alpha,\rho}_{w^{\alpha*}_\rho,y}t^{\lambda} \text{ (mod }\mathcal{I}_{\overline{G^x}}).\]
Let $\pi_\rho^\alpha:V^*_\lambda\to W_\rho^{\alpha*}$ denote the projection of $V_\lambda^*$ onto $W_\rho^{\alpha*}$. Then $\pi_\rho^\alpha(h\cdot v_\lambda^*)(w_\rho^\alpha)\neq0$, so by Proposition \ref{bijection} there is an element $z\in W_\rho^\alpha$ such that 
\[f^{\alpha,\rho}_{w^{\alpha*}_\rho,y}t^{\lambda}\equiv f^{\alpha,\rho}_{\pi_\rho^\alpha(h\cdot v_\lambda^*),z}t^{\lambda} \text{ (mod }\mathcal{I}_{\overline{G^x}}).\]

Then
\begin{align*}
\Psi'(z)=\left(h\cdot v_\lambda^*\otimes z\right)t^\lambda=\left(\pi_\rho^\alpha(h\cdot v_\lambda^*)\otimes z\right)t^\lambda,
\end{align*}
so in fact
\[\Psi(z)\equiv f^{\alpha,\sigma}_{w^*,v}t^{\lambda} \text{ (mod }\mathcal{I}_{\overline{G^x}}).\qedhere\]
\end{proof} 

\section{Orbits on the Peterson Variety}
\label{orbit}
We will give a description of the orbits of $G^e$ on $\pet$, and in particular we will show that in most cases there are infinitely many. For this we will consider the Peterson variety as a subvariety of the flag variety $G/B$ with basepoint $\mathfrak{b}$, coming from the embedding
\begin{align*}
G^e&\longhookrightarrow G/B\\
g&\longmapsto gw_0\cdot \mathfrak{b}
\end{align*}
where $w_0$ is the longest word of the Weyl group $W$.

Let $f_1,\ldots,f_l$ be the negative simple root vectors in $\mathfrak{g}$. In this setting, the Peterson variety has the following description \cite{rietsch}:
\begin{align}
\label{peter}
\pet=\left\{gB\in G/B\mid Ad(g^{-1})\cdot e\in\mathfrak{b}\oplus\left(\sum_{i=1}^l\mathbb{C}f_i\right)\right\}.
\end{align}

We introduce some notation. For any $I\subseteq\{1,\ldots,l\}$ indexing a subset of the simple roots $\{\alpha_i\mid i\in I\}$, let $P_I$ be the corresponding parabolic subgroup, $L_I$ its Levi subgroup, $U_I$ its unipotent radical, and $\mathfrak{l}_I=\text{Lie}(L_I)$ and $\mathfrak{u}_I=\text{Lie}(U_I)$ their Lie algebras. 

Let $N_I\subset L_I$ be the maximal unipotent subgroup of the Levi, and $\mathfrak{n}_I$ its Lie algebra. Let $W_I$ be the subgroup of $W$ generated by the reflections corresponding to the simple roots $\{\alpha_i\mid i\in I\}$, and let $w_I$ be the longest element of $W_I$. Let 
\[e_I=\sum_{i\in I} e_i\]
be a nilpotent element of $\mathfrak{g}$, and note that it is regular in $[\mathfrak{l}_I,\mathfrak{l}_I]$. 

The centralizer of $e_I$ in $L_I$ decomposes as a product
\[L_I^{e_I}=C_I\times A_I\]
where $C_I=Z(L_I)$ is the center of $L_I$ and $A_I$ is a unipotent subgroup of $L_I$, as in Section \ref{centralizers}.

To find the $G^e$-orbits on $\pet$, we will use the Bruhat decomposition. We have 
\[\pet=\bigcup_{w\in W} \left(\pet\cap NwB/B\right)\]
and each intersection $\pet\cap NwB/B$ is a $G^e$-stable subset. 

\begin{lem}[\cite{har.tym:10}, Proposition 5.8]
The intersection of $\pet$ with the Schubert cell $NwB/B$ non-empty if and only if $w$ is the longest word $w_I$ of some parabolic Weyl group $W_I$.
\end{lem}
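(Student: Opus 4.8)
The plan is to combine the explicit description \eqref{peter} of $\pet$ with a contraction argument: a suitable one-parameter subgroup acts on $\pet$ and pushes every Bruhat cell $NwB/B$ onto its unique torus-fixed point $wB$, which reduces the statement to determining for which $w$ one has $wB\in\pet$. Note that the relevant torus here is \emph{not} $T$ (the Peterson variety is not $T$-stable), but the principal $\mathbb{G}_m$.

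Concretely, I would take $\eta\colon\mathbb{G}_m\to T$ to be the cocharacter $2\rho^\vee$, the sum of the positive coroots — equivalently, the one-parameter subgroup generated by an element $h$ with $[h,e]=2e$ as in \cite{kost1}. Then $\langle\alpha_i,\eta\rangle=2$ for every simple root, so $\mathrm{Ad}(\eta(\tau))\cdot e=\tau^{2}e$; since rescaling $e$ preserves the condition defining $\pet$ in \eqref{peter}, the (Zariski closed) variety $\pet\subseteq G/B$ is $\eta$-stable. Moreover $\eta$ is regular, as $\langle\alpha,\eta\rangle=2\,\mathrm{ht}(\alpha)\neq0$ for every root $\alpha$, so its fixed locus in $G/B$ is exactly $\{wB : w\in W\}$; and since $\eta(\mathbb{G}_m)\subseteq B$ normalizes $N$, each cell $NwB/B$ is $\eta$-stable.

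Now suppose $\pet\cap NwB/B\neq\emptyset$ and pick a point $p=n\dot w B$ with $n\in N$. I would compute $\lim_{\tau\to0}\eta(\tau)\cdot p=\lim_{\tau\to0}\bigl(\eta(\tau)n\eta(\tau)^{-1}\bigr)\dot w B$; conjugation by $\eta(\tau)$ scales the root subgroup $U_\gamma$ by $\tau^{2\,\mathrm{ht}(\gamma)}$, so $\eta(\tau)n\eta(\tau)^{-1}\to 1$ and, the variety $G/B$ being complete, the limit exists and equals $\dot wB=wB$. As $\pet$ is closed and $\eta$-stable, this forces $wB\in\pet$. Taking $g=\dot w$ in \eqref{peter}, $\mathrm{Ad}(\dot w^{-1})\cdot e=\sum_i\mathrm{Ad}(\dot w^{-1})\cdot e_i$ is a sum of nonzero vectors in the pairwise distinct root spaces $\mathfrak{g}_{w^{-1}\alpha_i}$, so it lies in $\mathfrak{b}\oplus\sum_j\mathbb{C}f_j$ precisely when each $w^{-1}\alpha_i$ is either a positive root or equal to $-\alpha_j$ for some $j$; that is, $w^{-1}(\Delta)\subseteq\Phi^+\cup(-\Delta)$.

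It then remains to prove the purely combinatorial claim that such a $w$ must be the longest element $w_I$ of the parabolic $W_I$ with $I:=\{i : w^{-1}\alpha_i<0\}$. I would do this by showing $N(w^{-1})=\Phi_I^+$, where $N(v)=\{\gamma\in\Phi^+ : v\gamma<0\}$: the inclusion $\Phi_I^+\subseteq N(w^{-1})$ holds because $N(w^{-1})$ is closed under taking root sums and contains $\{\alpha_i : i\in I\}$, and the reverse inclusion follows by an induction on height in which one uses, crucially, that $w^{-1}\alpha_i$ is a \emph{simple} negative root for $i\in I$ — not merely negative — which is exactly the point that excludes elements like $s_1s_2$ in type $A_2$. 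Since $N(w^{-1})=\Phi_I^+=N(w_I)$ and $v\mapsto N(v)$ is injective, $w=w_I$. I expect the geometric part to be routine once one identifies the right $\mathbb{G}_m$; the one place real care is needed is this last step, in extracting from the hypothesis the stronger statement that the images $w^{-1}\alpha_i$, $i\in I$, are simple.
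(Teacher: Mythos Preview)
Your argument is correct and reaches the same intermediate characterization $w^{-1}\Delta\subseteq\Phi^+\cup(-\Delta)$ as the paper, but by a more geometric route. The paper argues directly: writing $n^{-1}=\exp(x)$ and expanding $w^{-1}n^{-1}\cdot e=w^{-1}\cdot e+w^{-1}(x\cdot e+\tfrac12 x^2\cdot e+\cdots)$, it observes that the two summands are supported on disjoint collections of root spaces (the first on $\{w^{-1}\alpha_i\}$, the second on roots $\beta$ with $\mathrm{ht}(w\beta)\ge2$), so membership in $\mathfrak b\oplus\sum_i\mathbb Cf_i$ forces $w^{-1}\cdot e$ itself to lie there already. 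Your $\mathbb G_m$-contraction is the geometric incarnation of the same height filtration---$\eta(\tau)$ scales the height-$k$ piece by $\tau^{2k}$, so the limit isolates the lowest-height term---and has the virtue of making explicit the standard $\mathbb G_m$-action on $\pet$, a structure of independent interest. The paper's version is in turn slightly more elementary, staying entirely inside root-space bookkeeping with no limits. For the final combinatorial step the paper simply cites \cite{rie3}, Lemma~2.2, whereas you sketch a direct proof; your identification of the crucial point (that $w^{-1}\alpha_i$ must be a \emph{simple} negative root for $i\in I$) is exactly right, and the height induction goes through once one notes that $w^{-1}$ carries $\Phi_I$ bijectively onto $\Phi_{\sigma(I)}$, so any putative $\gamma\in N(w^{-1})\setminus\Phi_I^+$ would force $w^{-1}\gamma$ to have a strictly positive coefficient on some simple root outside $\sigma(I)$, a contradiction.
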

\begin{proof}
Suppose $nwB\in NwB/B$ is in the Peterson variety. Then by \eqref{peter}
\[w^{-1}n^{-1}\cdot e\in \mathfrak{b}\oplus \sum_{i=1}^l\mathbb{C}f_i.\]
We can write
\[n^{-1}=\text{exp}(x)\]
for some nilpotent $x\in\mathfrak{n}$, and then
\begin{align}
\label{longestword}
w^{-1}n^{-1}\cdot e&=w^{-1}\left(e+x\cdot e+\frac{x^2\cdot e}{2}+\ldots\right)\nonumber\\
&=w^{-1}\cdot e+w^{-1}\left(x\cdot e+\frac{x^2\cdot e}{2}+\ldots\right).
\end{align}
Since the two summands in \eqref{longestword} belong to disjoint sums of roots spaces, this means in particular that 
\[w^{-1}\cdot e\in\mathfrak{b}\oplus\sum_{i=1}^l\mathbb{C}f_i.\]
So for any simple root $\alpha_i$, $w^{-1}\cdot\alpha_i$ is either a simple negative root or a positive root. But this precisely characterizes the longest words of parabolic Weyl groups (see \cite{rie3}, Lemma 2.2), and so $w=w_I$ for some $I\subseteq\{1,\ldots,l\}$.
\end{proof}

\begin{rk}
The following result is proved by Insko and Yong \cite{inskoyong} in type $A$, and is known to experts in the general case.
\end{rk}

\begin{prop}
In the notation above,
\[\pet\cap\, Nw_IB/B=A_Iw_IB/B.\]
\end{prop}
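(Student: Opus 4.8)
The plan is to reduce the problem to coset representatives of the form $nw_IB$ with $n$ in the maximal unipotent subgroup $N_I$ of the Levi $L_I$, and then to read off the defining condition \eqref{peter} one root space at a time. For the reduction I would use that the longest element $w_I$ of $W_I$ has inversion set exactly $\Phi_I^+$, the set of positive roots of $L_I$; hence conjugation by $w_I$ permutes among themselves the root subgroups attached to the roots of $\mathfrak{u}_I$, so that $U_I\subseteq w_IBw_I^{-1}$. Since $U_I$ is normal in $N$ and $N=N_IU_I$, this forces $uw_IB=w_IB$ for every $u\in U_I$, and hence $n\mapsto nw_IB$ identifies $N_I$ with the cell $Nw_IB/B$. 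Moreover the centralizer $[L_I,L_I]^{e_I}$ of the principal nilpotent $e_I$ is contained in $N_I$ and equals $A_I$, so $A_Iw_IB/B$ is a closed subset of the cell; it then remains to show that, for $n\in N_I$, the point $nw_IB$ lies on $\pet$ precisely when $n\in A_I$.

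For this, fix $n\in N_I$ and write $n^{-1}=\expn(x)$ with $x\in\mathfrak{n}_I$, and split $e=e_I+e^I$, where $e^I=\sum_{i\notin I}e_i\in\mathfrak{u}_I$. Because $\mathfrak{n}_I$ normalizes itself and the ideal $\mathfrak{u}_I$, we get $Ad(\expn x)e_I\in\mathfrak{n}_I$ and $Ad(\expn x)e^I\in\mathfrak{u}_I$, living in disjoint collections of root spaces. Applying $Ad(w_I^{-1})$ and using that $w_I$ preserves the set of roots of $\mathfrak{u}_I$ and sends $\Phi_I^+$ onto $-\Phi_I^+$, the $e^I$-summand lands in $\mathfrak{u}_I\subseteq\mathfrak{b}$ and imposes no constraint, whereas the $e_I$-summand lands in $\bigoplus_{\beta\in\Phi_I^+}\mathfrak{g}_{-\beta}$; the intersection of this last space with $\mathfrak{b}\oplus\bigoplus_{i=1}^l\mathbb{C}f_i$ is exactly $\bigoplus_{i\in I}\mathbb{C}f_i$. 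Since $w_I$ carries the simple roots in $I$ to negatives of simple roots in $I$, the condition $nw_IB\in\pet$ becomes equivalent to
\[Ad(\expn x)\,e_I\in\bigoplus_{i\in I}\mathfrak{g}_{\alpha_i}.\]

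To finish, I would invoke the grading of $\mathfrak{g}$ by the eigenvalues of the semisimple element $h$ with $[h,e]=2e$, as in the proof of Proposition \ref{toprow}: the element $e_I$ is homogeneous and $\mathfrak{n}_I$ lies in strictly positive degrees, so each of the brackets $[x,e_I],\,[x,[x,e_I]],\dots$ lies in degrees strictly larger than that of $e_I$. Consequently $Ad(\expn x)e_I=e_I+[x,e_I]+\tfrac12[x,[x,e_I]]+\cdots$ can belong to $\bigoplus_{i\in I}\mathfrak{g}_{\alpha_i}$ only when $Ad(\expn x)e_I=e_I$, i.e.\ only when $n^{-1}=\expn(x)$ centralizes $e_I$. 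As the stabilizer of $e_I$ in $N_I$ is $[L_I,L_I]^{e_I}=A_I$, this says precisely that $n\in A_I$; conversely, every $a\in A_I$ centralizes $e_I$ and therefore satisfies the displayed condition, so $aw_IB\in\pet$. Together with the reduction above this gives $\pet\cap Nw_IB/B=A_Iw_IB/B$.

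The step I expect to be the main obstacle is the last one: ruling out that the higher-order terms of the exponential could conspire to place $Ad(\expn x)e_I$ inside the simple root spaces without $\expn(x)$ actually centralizing $e_I$. The $h$-grading is exactly what prevents this, just as in Proposition \ref{toprow}. The remaining delicate point, though it is purely bookkeeping, is to keep track of how $w_I$ permutes the two families of roots $\Phi_I^+$ and $\Phi^+\setminus\Phi_I^+$ and then to intersect the resulting root-space decompositions correctly with $\mathfrak{b}\oplus\bigoplus_i\mathbb{C}f_i$.
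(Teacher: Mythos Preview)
Your argument is correct and follows essentially the same route as the paper's: split $e=e_I+e^I$, observe that the $e^I$-piece always lands in $\mathfrak{u}_I\subset\mathfrak{b}$ after conjugation by $w_I$, reduce the Peterson condition to a constraint on $\text{Ad}(n^{-1})e_I$, and then use the root-height (equivalently, $h$-eigenvalue) grading to force $n^{-1}$ to centralize $e_I$. The only organizational difference is that you absorb $U_I$ into $w_IB$ at the outset (via $U_I\subset w_IBw_I^{-1}$) and work with representatives $n\in N_I$, whereas the paper keeps a general $n\in N$, writes its Levi decomposition $n^{-1}=vu$ with $v\in N_I$, $u\in U_I$, and eliminates $u$ at the end; the underlying facts used are identical.
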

\begin{proof}
Suppose first that $h\in A_I$, so that $h$ centralizes $e_I$, and write $e=e_I+e_I'$, where 
\[e_I'=\sum_{i\notin I}e_i\in \mathfrak{u}_I.\]
We will show that $hw_IB\in\pet$. Using \eqref{peter}, we obtain
\begin{align*}
w_Ih^{-1}\cdot e&=w_Ih^{-1}\cdot e_I+w_Ih^{-1}\cdot e_I'\\
&=w_I\cdot e_I+w_Ih\cdot e_I'
\end{align*}
Since $w_I$ negates all the positive roots $\{\alpha_i\mid i\in I\}$, the first term is in $\sum_{i\in I}\mathbb{C}f_i$. Since $\mathfrak{u}_I$ is normalized by $P_I$ and stable under the action of any representative of $w_I$, the second term is in $\mathfrak{u}_I$. So,
$$w_Ih^{-1}\cdot e\in\mathfrak{b}\oplus\sum_{i=1}^l\mathbb{C}f_i$$
and $hw_IB\in\pet.$

Conversely, let $n\in N$ so that $nw_IB\in\pet.$ Then 
\[w_In^{-1}\cdot e\in\mathfrak{b}\oplus\sum_{i=1}^l\mathbb{C}f_i.\]
Decomposing $e$ as above,
\[w_In^{-1}\cdot e=w_In^{-1}\cdot e_I+w_In^{-1}\cdot e_I',\]
and as before the second term is in $\mathfrak{u}_I$, so in fact
\[w_In^{-1}\cdot e_I\in\mathfrak{b}\oplus\sum_{i=1}^l\mathbb{C}f_i.\]

By the Levi decomposition, $n^{-1}=vu$ with $v\in L_I$ and $u\in U_I$. Since $n\in N$, we have $v\in N_I$, so we can write $v=\text{exp}(x)$ and $u=\text{exp}(y)$ for $x\in\mathfrak{n}_I$ and $y\in\mathfrak{u}_I$. Then
\begin{align*}
w_In^{-1}\cdot e_I&=w_I\text{exp}(x)\text{exp}(y)\cdot e_I\\
&=w_I(e_I+x\cdot e_I+\text{terms in $\mathfrak{u}_I$}).
\end{align*}
In particular,
\begin{align}
\label{wherex}
w_Ix\cdot e_I\in\mathfrak{b}\oplus\sum_{i=1}^l\mathbb{C}f_i.
\end{align}
But $x$ is a sum of positive root vectors of strictly positive height in the levi $\mathfrak{l}_I$, so $x\cdot e_I$ is a sum of root vectors with root height at least $2$. That is,
\[ x\cdot e_I\in\mathfrak{l}_I\cap\left(\bigoplus_{\text{ht}(\alpha)\geq2}\mathfrak{g}_\alpha\right),\]
and since $w_I$ flips every root in $\mathfrak{l}_I$, 
\[ w_Ix\cdot e_I\in\mathfrak{l}_I\cap\left(\bigoplus_{\text{ht}(\alpha)\leq-2}\mathfrak{g}_\alpha\right),\]
and to satisfy \eqref{wherex} we must have $x\cdot e_I=0$. Then $h=\text{exp}(x)\in A_I$ and
\[nw_IB=vuw_IB=vw_IB\in A_Iw_IB\]
since $u\in U_I$ and $U_I$ is $w_I$-stable. 
\end{proof}

In particular, $A_Iw_IB/B$ is $G^e$-stable, being the intersection of two $G^e$-stable subvarieties of $G/B$. The following Proposition describes the $G^e$-orbits on $A_Iw_IB/B$ . Define 
$$\pi_I:P_I\longrightarrow L_I$$
to be the projection of the parabolic $P_I$ onto its Levi subgroup. The image of $G^e$ under this projection centralizes $e_I$, because $e_I$ is itself the image of $e$ under the differential $d\pi_I:\mathfrak{p}_I\longrightarrow \mathfrak{l}_I.$ Therefore,
\[\pi_I(G^e)\subset A_I.\]

\begin{prop}
\label{orbits}
The $G^e$-orbits on $A_Iw_IB/B=\pet\cap Nw_IB/B$ are in bijection with the cosets of $A_I/\pi_I(G^e)$.
\end{prop}
\begin{proof}
Let $h,k\in A_I$ and suppose first that $hw_I\in gkw_IB$ for some $g\in G^e$. Then
\[k^{-1}g^{-1}h\in w_IBw_I\]
and in fact
\[k^{-1}g^{-1}h\in w_IBw_I\cap B=U_I.\]
The Levi decomposition gives $g^{-1}=xu$ for $x=\pi_I(g^{-1})\in A_I$ and $u\in U_I$, and we can write
\[k^{-1}g^{-1}h=k^{-1}xuh=k^{-1}xhh^{-1}uh=k^{-1}xhu'\]
where $u'\in U_I$ since $P_I$ normalizes $U_I$. Since this expression is in $U_I$, we have
\[k^{-1}xh\in U_I\]
and since $k,x,h\in L_I$ we conclude
\[k^{-1}xh=1.\]
Thus $k=xh$ and the elements $h$ and $k$ of $A_I$ are $\pi_I(G^e)$-translates. 

Conversely, suppose that $k=xh$ for some $x\in\pi_I(G^e)$. Then there is some $u\in U_I$ such that $xu\in G^e$, and we have
\begin{align*}
xu\cdot(hw_IB)&=kh^{-1}uhw_IB\\
&=kh^{-1}hvw_IB\quad\text{for some $v\in U_I$, since $h$ normalizes $U_I$}\\
&=kw_IB\quad\text{since $w_I$ normalizes $U_I$}
\end{align*}
so the cosets $hw_IB$ and $kw_IB$ are in the same $G^e$-orbit.
\end{proof}

Proposition \ref{orbits} gives a bijective correspondence between the $G^e$-orbits on the intersection of the Peterson variety with the Schubert cell $Nw_IB$ and the $\pi_I(G^e)$-cosets in the subgroup $A_I$ of $L_I$. Since $A_I$ and $\pi_I(G^e)$ are unipotent groups, the coset space $A_I/\pi_I(G^e)$ is in fact a vector space.

Because the dimension of $\pi_I(G^e)$ may be strictly less than the dimension of $A_I$, there may be infinitely many $G^e$-orbits in the boundary of the Peterson variety. In type A this is the case for all choices of $I$ for which $[\mathfrak{l}_I,\mathfrak{l}_I]$ is not simple, and such a choice exists in all ranks strictly greater than $2$. 

\bibliographystyle{plain}
\bibliography{final}

\end{document}